\documentclass[a4paper,10pt]{article}

\usepackage{a4wide}
\usepackage[utf8]{inputenc}
\usepackage[english]{babel}
\usepackage{amsmath,amssymb,amsfonts,amsthm,amsbsy}
\usepackage{mathrsfs}
\usepackage{mathtools}
\usepackage[font=footnotesize,labelfont=footnotesize]{caption}
\usepackage{authblk}
\usepackage{subcaption}
\usepackage{multirow}
\usepackage{comment}
\usepackage{bm}
\mathtoolsset{showonlyrefs}

\usepackage{tikz}
\usepackage{pgfplots}
\pgfplotsset{every axis/.append style={
		label style={font=\scriptsize},
		tick label style={font=\scriptsize},
		legend style={font=\tiny\sf, row sep=-2pt},
		legend cell align={left}
}}
\newtheorem{theorem}{Theorem}[section]
\newtheorem{lemma}[theorem]{Lemma}
\newtheorem{corollary}[theorem]{Corollary}

\newtheorem{problem}[theorem]{Problem}

\newtheorem{assumption}[theorem]{Assumption}

\theoremstyle{remark}

\numberwithin{equation}{section}

\usepackage[numbers,sort&compress]{natbib}

\usepgfplotslibrary{external}
\tikzexternalenable

\usepackage[dvipsnames]{xcolor}
\usepackage[colorlinks=true,citecolor=red!75!green,linkcolor=blue!75!green]{hyperref}
\definecolor{ForestGreen}{RGB}{34,139,34}

\newcommand{\vecalg}[1]{\boldsymbol{\mathsf{#1}}}
\newcommand{\matalg}[1]{\boldsymbol{\mathsf{#1}}}

\renewcommand{\Re}{\operatorname{Re}}

\newcommand{\im}{\imath}
\renewcommand{\i}{\im}

\newcommand{\grad}{\boldsymbol \nabla}
\renewcommand{\div}{\grad \cdot}
\newcommand{\curl}{\grad \times}

\newcommand{\TS}{\textup{S}}
\newcommand{\TI}{\textup{I}}

\title{A HDG method with transmission variables for time-harmonic wave propagation problems with constant coefficients}% and application to aeroacoustics with a uniform mean flow}

\author[1]{S.~Pescuma}
\author[2]{G.~Gabard}
\author[3]{T.~Chaumont-Frelet}
\author[4]{A.~Modave}

\affil[1]{\footnotesize CERMICS, CNRS, ENPC, Institut Polytechnique de Paris, Marne-la-Vallée, France,
	\href{mailto:simone.pescuma@enpc.fr}{\texttt{simone.pescuma@enpc.fr}}}
\affil[2]{Laboratoire d'Acoustique de l'Université du Mans (LAUM), IA-GS, CNRS, Le Mans, France
	\href{mailto:gwenael.gabard@univ-lemans.fr}{\texttt{gwenael.gabard@univ-lemans.fr}}}
\affil[3]{\footnotesize Inria, Laboratoire Paul Painlevé, Université de Lille, Villeneuve-d'Ascq, France
	\href{mailto:theophile.chaumont@inria.fr}{\texttt{theophile.chaumont@inria.fr}}}
\affil[4]{\footnotesize POEMS, CNRS, Inria, ENSTA, Institut Polytechnique de Paris, Palaiseau, France,
	\href{mailto:axel.modave@ensta.fr}{\texttt{axel.modave@ensta.fr}}}
\date{}

\begin{document}

\maketitle

\begin{abstract}
Iterative finite element solvers for time-harmonic wave problems are notoriously slow to converge, owing to fundamental properties of these problems.
We present a variant of the hybridizable discontinuous Galerkin (HDG) method that is better suited to fast iterative solution.
Unlike the standard hybridization strategy, which eliminates physical unknowns by introducing an auxiliary numerical flux on element faces, our approach instead introduces a transmission variable on those faces.
For Helmholtz problems, this reformulation, known as CHDG, has been shown to significantly accelerate the convergence of iterative schemes relative to standard HDG.
The present work extends CHDG to a general framework covering wave propagation problems with constant coefficients, capable of handling diverse wave types in a unified manner.
We prove that the resulting hybridized system is well-posed and amenable to fixed-point iteration.
As a practical application, we apply the method to the time-harmonic linearized Euler equations with a uniform subsonic mean flow.
The method is illustrated through two-dimensional numerical benchmarks involving both sound and vorticity waves, with a systematic comparison of the convergence behaviour of several iterative schemes across a range of configurations.
\end{abstract}

\setlength{\parskip}{4pt}

\section{Introduction}

Wave propagation appears in a wide range of phenomena, such as acoustics, aero-acoustics, electromagnetism, elasticity and water waves.
The difficulty in these problems is that the wave fields can be highly oscillatory and can cover large domains.
Discontinuous Galerkin (DG) finite element methods are particularly well suited to computing high-fidelity numerical solutions in situations involving complex geometries and physics \cite{shi2018, lu2004, carstensen2016, hesthaven2007, klockner2009, chaumontfrelet2024, imbertgerard2024}.
Furthermore, in the frequency domain, using high-order polynomial basis functions reduces dispersion error in high-frequency cases \cite{beriot2015, lieu2016, christodoulou2017, congreve2019}.

From a computational point of view, the finite element discretization of time-harmonic problems leads to sparse unstructured linear systems.
Direct solvers can become expensive and difficult to run efficiently on parallel architectures.
While iterative solvers require far less memory and allow for efficient parallel implementations, their convergence may be slow due to the intrinsic properties of the linear system itself, see for instance \cite{ernst2011difficult}.
Preconditioning techniques and domain decomposition methods have been, and continue to be, actively studied in order to accelerate convergence \cite{bayliss1983, erlangga2004, engquist2011, gander2019, dolean2008, bouajaji2012}.

We consider a class of DG methods called hybridizable discontinuous Galerkin (HDG) methods \cite{barucq2021_2, barucq2023, chen2013, cockburn2008, nguyen2011, li2013, camargo2020}.
With this type of methods, a hybrid variable is defined on the mesh skeleton, which allows the physical variables to be decoupled at the interface between elements.
The physical variables defined within each element can then be eliminated.
This yields a smaller, hybridized system involving only the hybrid variable itself.
The physical variables within each element are easily recovered by a quick, and local, post-processing step.
Hybridization not only modifies the size and structure of the global system, but also its conditioning.
This opens the door to more efficient iterative solution procedures.

With the classical HDG approach, the hybrid variable corresponds to a numerical trace.
Here, the hybrid variable corresponds to transmission variables.
They are directly related to the physics of the problem and are based on the concept of characteristic variables.
This approach, called CHDG, was initially developped for Helmholtz problems in~\cite{modave2023} by two of the authors.
The strategy can be linked to some non-overlapping domain decomposition methods \cite{collino2020, despres1991, farhat2009, modave2020} and Trefftz methods based on ultra weak variational formulations \cite{barucq2021, barucq2024, cessenat1998, gabard2007, huttunen2002, parolin2022, rivet2026, imbertgerard2025, imbertgerard2025b, pernet2025}.
In \cite{modave2023}, numerical results show that standard iterative schemes converge faster when applied to the CHDG system than to systems obtained using the non-hybridized DG method or the standard HDG method.
Similar results were obtained in \cite{rappaport2025} for electromagnetic waves in homogeneous media and in \cite{pescuma2025} for acoustic waves in heterogeneous media.

The present work extends the CHDG method to a general category of wave propagation problems in the frequency domain.
More specifically, we consider linear symmetric hyperbolic systems with constant coefficients.
This encompasses the previous CHDG methods considered in \cite{modave2023, rappaport2025}.
The DG scheme is defined using upwind fluxes, which are written in terms of transmission variables.
The scheme is then hybridized using the incoming transmission variable, leading to a hybridized system of the form $(\TI-\Pi\TS)\bm{G}^-=\bm{b}$, with the set of transmission variables $\bm{G}^-$, an exchange operator $\Pi$, a scattering operator $\TS$, and a right-hand side $\bm{b}$.
Assuming the boundary conditions are passive, we prove that the operator $\Pi\TS$ is a strict contraction, and consequently that the hybridized system can be solved using the fixed-point iteration.

In the second part of this work, the method is applied to the linearized Euler equations to model the propagation of sound and vorticity waves in a uniform subsonic mean flow.
The presence of vorticity waves brings an interesting aspect to the problem, in that they are only convected by the mean flow.
As a consequence, the number of transmission variables on each element face can vary depending on its orientation relative to the mean flow.
The mathematical framework proposed in this work handles this added complexity naturally.
As in previous works \cite{modave2023, pescuma2025, rappaport2025}, the convergence of classical iterative schemes (fixed point, CGNR, and GMRES) applied to the CHDG hybridized system is studied using two-dimensional benchmarks.
Note that HDG methods have been proposed and studied for the convected Helmholtz and Galbrun equations e.g.~in \cite{rouxelin2021, barucq2023, halla2025hybrid}.
The proposed approach differs in that it models both sound and vorticity waves using the linearized Euler equations.

The remainder of this paper is organised as follows.
In Section~\ref{sec:HypSysAndTransVar}, the relevant properties of hyperbolic systems are recalled, and the notion of transmission variables is introduced.
The DG method with upwind fluxes is described in Section~\ref{sec:DG} for a general time-harmonic problem, and the hybridization with transmission variables is explained and analyzed in Section~\ref{sec:CHDG}.
In Section~\ref{sec:aeroac}, we apply the method to the propagation of aeroacoustic waves in a uniform subsonic mean flow.
The method is assessed systematically in Section~\ref{sec:numStudy} by considering a series of two-dimensional benchmark problems with several iterative schemes.
Section~\ref{conclusion} provides conclusions and perspectives.

%%%%%%%%%%%%%%%%%%%%%%%%%%%%%%%%%%%%%%%%%%%%%%%%%%

\section{Hyperbolic systems and transmission variables}
\label{sec:HypSysAndTransVar}

Before discussing their numerical solutions in the frequency domain, we introduce key properties of linear, symmetric, hyperbolic systems in the time domain.
These are written on a three-dimensional bounded domain $\Omega\subset\mathbb{R}^3$ as follows
\begin{equation}
	\label{eqn:3DHypSys}
	\frac{\partial\tilde{\bm{U}}}{\partial t}
	+ \sum_{j=1}^3 \bm{A}_j\frac{\partial\tilde{\bm{U}}}{\partial x_j}
	= \tilde{\bm{S}}_\mathrm{vol},
\end{equation}
where the unknown is an $l$-dimensional, real, vector field $\tilde{\bm{U}}(t,\bm{x}) : \mathbb{R}^+\times\Omega \rightarrow \mathbb{R}^l$.
The coefficient matrices $\{\bm{A}_j\}_{j=1,2,3}\in\mathbb{R}^{l\times l}$ are constant and symmetric.
On the right-hand side, $\tilde{\bm{S}}_\mathrm{vol}(t,\bm{x}) : \mathbb{R}^+\times\Omega \rightarrow \mathbb{R}^l$ denotes a volume source.
Detailed expositions of the properties of such systems can be found in \cite{gustafsson1995,leveque2002,benzoni2006}.

%%%%%%%%%%%%%%%%%%%%%%%%%%%%%%%%%%%%%%%%%%%%%%%%%%%%%%%%%%%%%%%%%%%%%%%%%%%%%%%%%%%%%%%%

\subsubsection*{Characteristic analysis}

Assuming that the solution $\tilde{\bm{U}}$ varies only along the direction given by a unit vector $\bm{n}\in\mathbb{R}^3$, the system becomes (ignoring the source term)
\begin{align}
	\label{eqn:1DHypSys}
	\frac{\partial\tilde{\bm{U}}}{\partial t}
	+ \bm{F}\frac{\partial\tilde{\bm{U}}}{\partial n}
	= \mathbf{0},
\end{align}
where $n := \bm{x}\cdot\bm{n}$ is the coordinate along the $\bm{n}$ direction.
The \textit{flux matrix}
\begin{align}
	\label{eqn:fluxMatrix}
	\bm{F} := \sum_{j=1}^3 n_{j} \bm{A}_{j}
\end{align}
is real and symmetric, and therefore diagonalizable with real eigenvalues.
We write its eigendecomposition $\bm{F} = \bm{W}\bm{\varLambda}\bm{W}^{\dagger}$, with $^\dagger$ the complex transpose, $\bm{\varLambda}$ the matrix of eigenvalues, and $\bm{W}$ the unitary matrix of eigenvectors.
Using this decomposition in \eqref{eqn:1DHypSys} leads to a set of uncoupled transport equations:
\begin{align}
	\label{eqn:1DTranspSys}
	\frac{\partial(\bm{W}^\dagger\tilde{\bm{U}})}{\partial t} + \bm{\varLambda}\frac{\partial(\bm{W}^{\dagger}\tilde{\bm{U}})}{\partial n} = \mathbf{0}.
\end{align}
The components of $\bm{W}^\dagger\tilde{\bm{U}}$ are called \emph{characteristic variables} and they can be interpreted as quantities transported along the direction $\bm{n}$.
For each component, the corresponding eigenvalue gives the direction and velocity of transport.
They are therefore transported in the positive or negative $\bm{n}$ direction for strictly positive and strictly negative eigenvalues, respectively.
Zero eigenvalues correspond to steady components in the solution.

%%%%%%%%%%%%%%%%%%%%%%%%%%%%%%%%%%%%%%%%%%%%%%%%%%%%%%%%%%%%%%%%%%%%%%%%%%%%%%%%%%%%%%%%

\subsubsection*{Transmission variables}

A central aspect of the numerical model proposed in this paper is the definition of the \emph{transmission variables} $\tilde{\bm{G}}^\pm$.
They are scaled versions of the characteristic variables associated with the strictly positive and strictly negative eigenvalues:
\begin{align}
	\label{eqn:transVar}
	\tilde{\bm{G}}^+
	= \sqrt{\bm{\varLambda}^+}(\bm{W}^+)^{\dagger}\tilde{\bm{U}}
	\qquad\text{and}\qquad
	\tilde{\bm{G}}^-
	= \sqrt{-\bm{\varLambda}^-}(\bm{W}^-)^{\dagger}\tilde{\bm{U}},
\end{align}
where the diagonal matrices $\bm{\varLambda}^+\in\mathbb{R}^{m^+\times m^+}$ and $\bm{\varLambda}^-\in\mathbb{R}^{m^-\times m^-}$ contain only the strictly positive and strictly negative eigenvalues, respectively, with $m^+$ and $m^-$ the numbers of strictly positive and strictly negative eigenvalues.
The rectangular matrices $\bm{W}^+\in\mathbb{R}^{l\times m^+}$ and $\bm{W}^+\in\mathbb{R}^{l\times m^-}$ store the associated unit eigenvectors.
The rationale for using this particular scaling of the characteristic variables is explained below.

Like the characteristic variables, the transmission variables $\tilde{\bm{G}}^+$ and $\tilde{\bm{G}}^-$ correspond to the propagation of information in the positive or negative $\bm{n}$ direction.
If the vector $\bm{n}$ is the outward unit normal on the boundary of a spatial region, then $\tilde{\bm{G}}^+$ and $\tilde{\bm{G}}^-$ are referred to as the \emph{outgoing} and \emph{incoming transmission variables}, respectively.
Note that, even with constant coefficient matrices $\bm{A}_j$, the numbers $m^\pm$ of incoming and outgoing transmission variables can change depending on the orientation $\bm{n}$ of the boundary.

%%%%%%%%%%%%%%%%%%%%%%%%%%%%%%%%%%%%%%%%%%%%%%%%%%%%%%%%%%%%%%%%%%%%%%%%%%%%%%%%%%%%%%%%

\subsubsection*{Conservation relations}

The system in \eqref{eqn:3DHypSys} is a set of conservation equations for the quantities in the vector field $\tilde{\bm{U}}$.
Integrating over the domain $\Omega$ yields the corresponding conservation relation in integral form:
\begin{align}
	\frac{d}{dt}\int_\Omega \tilde{\bm{U}}\:d\Omega
	= - \int_{\partial\Omega} \bm{F}\tilde{\bm{U}}\:d\Gamma
	+ \int_{\Omega} \tilde{\bm{S}}_\mathrm{vol}\:d\Omega,
\end{align}
where $\bm{F}$ is the flux matrix \eqref{eqn:fluxMatrix} associated to the unit outward normal $\bm{n}$ to the boundary $\partial\Omega$.
The vector $\bm{F}\tilde{\bm{U}}$ is therefore the \emph{physical flux} of the quantities $\tilde{\bm{U}}$ across the boundary.

Furthermore, left-multiplying the system \eqref{eqn:3DHypSys} by $\tilde{\bm{U}}^\dagger$ and integrating over the domain  $\Omega$ lead to an energy conservation relation
\begin{align}
	\frac{d}{dt}\int_\Omega \tfrac{1}{2}\tilde{\bm{U}}^\dagger\tilde{\bm{U}}\:d\Omega
	= - \int_{\partial\Omega} \tfrac{1}{2}\tilde{\bm{U}}^\dagger\bm{F}\tilde{\bm{U}}\:d\Gamma
	+ \int_{\Omega} \tilde{\bm{U}}^\dagger\tilde{\bm{S}}_\mathrm{vol}\:d\Omega,
	\label{eqn:consEnergy}
\end{align}
where $\frac{1}{2}\tilde{\bm{U}}^\dagger\tilde{\bm{U}}$ is the \emph{energy density} and $\frac{1}{2}\tilde{\bm{U}}^\dagger\bm{F}\tilde{\bm{U}}$ is the \emph{energy flux}.
The sign of the energy flux obviously depends on the nature of the boundary conditions imposed on $\partial\Omega$.

An important connexion can be made with the transmission variables defined on the boundary of the domain.
Using \eqref{eqn:transVar}, the energy flux can be rewritten as
\begin{align}
	\tfrac{1}{2}\tilde{\bm{U}}^\dagger\bm{F}\tilde{\bm{U}}
	= \tfrac{1}{2}\|\tilde{\bm{G}}^+\|_2^2 - \tfrac{1}{2}\|\tilde{\bm{G}}^-\|_2^2.
	\label{eqn:consEnergyFlux}
\end{align}
This indicates that $\tilde{\bm{G}}^+$ and $\tilde{\bm{G}}^-$ are solely associated to the outgoing and incoming energy flux, respectively.
As a consequence, at a time $t$, the boundary $\partial\Omega$ absorbs energy if the norm of the outgoing transmission variable $\tilde{\bm{G}}^+$ is larger than the norm of the incoming one $\tilde{\bm{G}}^-$.
In the time domain, a boundary is said to be \emph{passive} if the net energy flux is positive at all time $t$, with the net energy flux defined as the time integration of the energy flux between the initial condition and a time $t$ (see for instance \cite{srivastava2021}).

%%%%%%%%%%%%%%%%%%%%%%%%%%%%%%%%%%%%%%%%%%%%%%%%%%%%%%%%%%%%%%%%%%%%%%%%%%%%%%%%%%%%%%%%

\subsubsection*{Boundary conditions}

The characteristic variables also play a central role in defining well-posed boundary conditions on $\partial\Omega$ \cite{higdon1986initial,benzoni2006}.
For linear hyperbolic systems, one should specify $m^-$ boundary conditions (with $m^-$ the number of strictly negative eigenvalues of the flux matrix $\bm{F}$) such that every incoming characteristic variable is prescribed.
In other words, the boundary conditions should be such that the incoming transmission variables $\tilde{\bm{G}}^-$ are fully specified.
Therefore, for a portion $\Gamma$ of the boundary $\partial\Omega$ where $m^-$ is constant, we consider a generic family of linear boundary conditions
\begin{align}
	\label{eqn:BCtime}
	\tilde{\bm{G}}^- = \bm{\alpha} \tilde{\bm{G}}^+
	+ \tilde{\bm{S}}_\mathrm{sur},
\end{align}
where the incoming transmission variables $\tilde{\bm{G}}^-$ are expressed in terms of the outgoing transmission variables $\tilde{\bm{G}}^+$ through a constant rectangular coefficient matrix $\bm{\alpha} \in\mathbb{R}^{m^-\times m^+}$.
We also introduce a vector $\tilde{\bm{S}}_\mathrm{sur}(t,\bm{x}) : \mathbb{R}^+\times\Gamma \rightarrow \mathbb{R}^l$ of surface sources.
As we will illustrate below, all the physically relevant boundary conditions can indeed be formulated in this way.

Since $\bm{\alpha}$ relates the incoming and outgoing perturbations on the boundary, it can be interpreted as a reflection matrix.
In fact, for a homogeneous boundary condition ($\tilde{\bm{S}}_\mathrm{sur}=\bm{0}$), it can be noted that the boundary is passive if $\|\bm{\alpha} \tilde{\bm{G}}^+\|_2 = \|\tilde{\bm{G}}^-\|_2 \le \|\tilde{\bm{G}}^+\|_2$ for all $\tilde{\bm{G}}^+$.
This holds if and only if the reflection matrix $\bm{\alpha}$ is a contraction:
\begin{align}
	\label{eqn:BChyp}
	\max_{i=1,\dots,m^+} |\lambda_i(\bm{\alpha}^\dagger\bm{\alpha})| \leq 1.
\end{align}

%%%%%%%%%%%%%%%%%%%%%%%%%%%%%%%%%%%%%%%%%%%%%%%%%%

\section{Discontinuous Galerkin method with upwind fluxes}
\label{sec:DG}

In this section, we introduce the discontinuous Galerkin (DG) method that we hybridize.
In particular, we introduce the upwind fluxes in terms of the transmission variables for general time-harmonic problems of the form \eqref{eqn:3DHypSys}.

%%%%%%%%%%%%%%%%%%%%%%%%%%%%%%%%%%%%%%%%%%%%%%%%%%%%%%%%%%%%%%%%%%%%%%%%%%%%%%%%%%%%%%%%

\subsection{General time-harmonic wave propagation problem}\label{sec:general_time_harmonic_problem}

We consider the time-harmonic version of the general system \eqref{eqn:3DHypSys} with boundary conditions of the type \eqref{eqn:BCtime}.
Both the solution and the given data depend implicitly on time as $e^{-\i\omega t}$, where $\i$ denotes the imaginary unit and $\omega$ is the angular frequency.

We are looking for an unknown $l$-dimensional complex-valued vector field $\bm{U}(\bm{x})$ defined on a bounded domain $\Omega\subset\mathbb{R}^3$.
The boundary $\partial\Omega$ is partitioned into $N_\mathrm{sur}>0$ non-overlapping open regions $\{\Gamma_i\}_{i=1,\dots,N_\mathrm{sur}}$ such that $\partial\Omega=\cup_{i=1,\dots,N_\mathrm{sur}}\overline{\Gamma_i}$.
On each region $\Gamma_i$, a fixed number $m_i^-\geq0$ of boundary conditions must be prescribed.
The problem reads as follows.
\begin{problem}
	Find $\bm{U} : \Omega\rightarrow\mathbb{C}^l$ such that
	\begin{align}
		\label{eqn:sysHarm}
		\left\{
		\begin{aligned}
			- \i\omega\bm{U}
			+ \sum_{j=1}^3\bm{A}_j\frac{\partial\bm{U}}{\partial x_j}
			& = \bm{S}_\mathrm{vol},
			&
			& \text{ in } \Omega,
			\\
			\bm{G}^- = \bm{\alpha}_i\bm{G}^+
			& + \bm{S}_{\mathrm{sur},i},
			&
			& \text{ on each } \Gamma_i,
		\end{aligned}
		\right.
	\end{align}
	for given data $\bm{S}_\mathrm{vol} : \Omega\rightarrow\mathbb{C}^l$ and $\bm{S}_{\mathrm{sur},i} : \Gamma_i\rightarrow\mathbb{C}^{m_i}$ with $i=1,\dots,N_\mathrm{sur}$.
	\label{sym_sys}
\end{problem}
\noindent
For each region $\Gamma_i$, the vectors $\bm{G}^+(\bm{x}) \in \mathbb{C}^{m_i^+}$ and $\bm{G}^-(\bm{x}) \in \mathbb{C}^{m_i^-}$ are the time-harmonic versions of the transmission variables \eqref{eqn:transVar}.
The reflection matrices $\bm{\alpha}_i\in\mathbb{R}^{m_i^-\times m_i^+}$ are assumed to satisfy the property \eqref{eqn:BChyp}.
In the frequency domain, i.e. for time-harmonic solutions, the time-averaged energy density and flux are directly given by $\frac{1}{4}\bm{U}^\dagger\bm{U}$ and $\frac{1}{4}\bm{U}^\dagger\bm{F}\bm{U}$, respectively.
Therefore, a boundary is passive whenever $\|\bm{G}^+\|_2\ge\|\bm{G}^-\|_2$ at every point on the boundary.
For a homogeneous boundary condition $\bm{G}^- = \bm{\alpha}_i\bm{G}^+$, this is guaranteed when the reflection matrix satisfies \eqref{eqn:BChyp}.

%%%%%%%%%%%%%%%%%%%%%%%%%%%%%%%%%%%%%%%%%%%%%%%%%%%%%%%%%%%%%%%%%%%%%%%%%%%%%%%%%%%%%%%%

\subsection{Discontinuous Galerkin formulation}
\label{standard_DG_formulation}

Let $\mathcal{T}_h$ be a conforming mesh of the domain $\Omega$ consisting of simplicial elements.
The elements and the faces are denoted by the generic letters $K$ and $F$, respectively.
The collection of element boundaries is denoted by $\partial\mathcal{T}_h := \{\partial K~|~K\in\mathcal{T}_h\}$.
The collection of faces of the mesh is denoted by $\mathcal{F}_h$.
The collection of faces of an element $K$ is denoted by $\mathcal{F}_K$.
For a face $F$ of $K$, $\bm{n}_{K,F}$ is the unit outward normal to $K$ on $F$.

The unknown field is approximated on each element by vectors of polynomials of maximal degree $\mathrm{p}\ge0$, which belongs to
\begin{align}
	\bm{\mathcal{V}}_h^l := \prod_{K\in\mathcal{T}_h}\bm{\mathcal{P}}^l_\mathrm{p}(K),
\end{align}
where $\bm{\mathcal{P}}_\mathrm{p}^l$ denotes the space of $l$-dimensional complex-valued polynomial vectors with polynomial degree smaller or equal to $\mathrm{p}$.
The restriction of a field $\bm{U}_h\in\bm{\mathcal{V}}_h^l$ on $K$ is denoted by $\bm{U}_K$.

We introduce the inner products
\begin{align}
	(\bm{U},\bm{V})_K
	& :=\int_K \bm{V}^\dagger\bm{U} ~ d\bm{x},
	&
	\langle \bm{U},\bm{V}\rangle_{\partial K}
	& :=\sum_{F\in\mathcal{F}_K}\int_F \bm{V}^\dagger\bm{U} ~ d\sigma(\bm{x}),
	\\
	(\bm{U},\bm{V})_{\mathcal{T}_h}
	& :=\sum_{K\in\mathcal{T}_h}(\bm{U},\bm{V})_K,
	&
	\langle \bm{U},\bm{V}\rangle_{\partial\mathcal{T}_h}
	& :=\sum_{K\in\mathcal{T}_h}\langle \bm{U},\bm{V}\rangle_{\partial K}.
\end{align}
In the inner product $\langle\cdot,\cdot\rangle_{\partial K}$, the quantities used in the surface integral correspond to the restriction of fields defined on $K$ (e.g.~$\bm{U}_K$) or quantities associated with the faces of $K$ (e.g.~$\bm{n}_{K,F}$ with $F\in\mathcal{F}_K$), unless otherwise specified.

The DG formulation of Problem \ref{sym_sys} reads as follows.
\begin{problem}[DG formulation]
	\label{pbm:DG_general}
	Find $\bm{U}_h\in\bm{\mathcal{V}}_h^l$ such that, for all $\bm{V}_h\in\bm{\mathcal{V}}_h^l$,
	\begin{align}
		- \imath\omega \big(\bm{U}_h , \bm{V}_h \big)_{\mathcal{T}_h}
		- \sum_{j=1}^3 \Big(\bm{A}_{j}\bm{U}_h,\frac{\partial\bm{V}_h}{\partial x_{j}}\Big)_{\mathcal{T}_h}
		+ \big\langle \widehat{\bm{F}}_h(\bm{U}_h) , \bm{V}_h \big\rangle_{\partial\mathcal{T}_h}
		= 0.
	\end{align}
\end{problem}
\noindent
In this problem, $\widehat{\bm{F}}_h(\bm{U}_h)$ is the \emph{numerical flux} that must be defined.

%%%%%%%%%%%%%%%%%%%%%%%%%%%%%%%%%%%%%%%%%%%%%%%%%%%%%%%%%%%%%%%%%%%%%%%%%%%%%%%%%%%%%%%%

\subsection{Definition of the upwind fluxes}

The numerical flux is an approximation of the physical flux at each mesh face.
It must be defined carefully in order to preserve the consistency and stability of the scheme, see e.g.~\cite{toro2013, leveque2002, quarteroni2013}.
At the interior faces, i.e.~$F\not\subset\partial\Omega$, where the numerical solution $\bm{U}_h$ is a priori discontinuous, it is used to weakly enforce continuity conditions on the physical fields.
At the boundary faces, i.e.~$F\subset\partial\Omega$, it weakly prescribes the boundary conditions.
In this work, upwind fluxes are used.

\subsubsection*{Interior faces}

For an interior face $F\not\subset\partial\Omega$ of an element $K$, the flux matrix associated with the unit outward normal $\bm{n}_{K,F}$ is split as
\begin{align}
	\label{eqn:fluxSplitting}
	\bm{F}_{K,F} = \bm{F}_{K,F}^+ + \bm{F}_{K,F}^-,
	\qquad
	\text{with }
	\bm{F}_{K,F}^\pm := \bm{W}_{K,F}^\pm\bm{\varLambda}_{K,F}^\pm(\bm{W}_{K,F}^\pm)^\dagger,
\end{align}
where $\bm{\varLambda}_{K,F}^\pm$ and $\bm{W}_{K,F}^\pm$ are the eigenvalues and eigenvectors matrices associated to the strictly positive (superscript $^+$)  and strictly negative (superscript $^-$) eigenvalues, as defined in Section \ref{sec:HypSysAndTransVar}.
The \emph{upwind fluxes} are then defined as
\begin{align}
	\label{upwinding_splitting_general}
	\widehat{\bm{F}}_{K,F}(\bm{U}_h)
	:= \bm{F}_{K,F}^+\bm{U}_K + \bm{F}_{K,F}^- \bm{U}_{K'},
\end{align}
where $\bm{U}_K$ is the numerical field associated to the current element and $\bm{U}_{K'}$ is the one associated to the neighboring element sharing the face $F$, see Figure \ref{splitting_fig_general}.
\begin{figure}[!tb]
	\centering
	\includegraphics{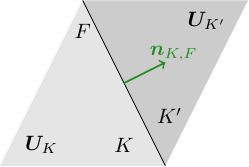}
	\caption[Notation for $\bm{U}_K$ and $\bm{U}_{K'}$ defined on two neighboring elements $K$ and $K'$]{Notation for $\bm{U}_K$ and $\bm{U}_{K'}$ defined on two neighboring elements $K$ and $K'$ that share the face $F$.}
	\label{splitting_fig_general}
\end{figure}
With these numerical fluxes, the numerical scheme is consistent, as the physical fluxes are recovered by replacing the numerical field $\bm{U}_h$ with the exact solution $\bm{U}$ in equation \eqref{upwinding_splitting_general}, i.e.~$\widehat{\bm{F}}_{K,F}(\bm{U}_h) = \widehat{\bm{F}}_{K,F}(\bm{U})$.

\subsubsection*{Reformulation with transmission variables}

The upwind fluxes implicitly mimic the wave propagation by treating information from each side of the interfaces differently.
Similar to \eqref{eqn:transVar}, we define the \textit{outgoing transmission variable} as
\begin{align}
	\bm{G}^{+}_{K,F}
	& := \sqrt{\bm{\varLambda}_{K,F}^+}(\bm{W}_{K,F}^+)^\dagger\bm{U}_{K,F},
	\label{eqn:IFace:transVarOut}
\end{align}
and the \textit{incoming transmission variable} as
\begin{align}
	\bm{G}^{-}_{K,F}
	& := \sqrt{-\bm{\varLambda}_{K,F}^-}(\bm{W}_{K,F}^-)^\dagger\bm{U}_{K',F}.
	\label{eqn:IFace:transVarIn}
\end{align}
By symmetry, the outgoing variable of one element is the incoming variable of the neighboring element, i.e.~$\bm{G}^{+}_{K,F}=\bm{G}^{-}_{K',F}$ and $\bm{G}^{-}_{K,F}=\bm{G}^{+}_{K',F}$.
The upwind fluxes can then be rewritten as
\begin{align}
	\widehat{\bm{F}}_{K,F}(\bm{U}_F)
	= \bm{W}_{K,F}^+\sqrt{ \bm{\varLambda}_{K,F}^+}\bm{G}_{K,F}^{+}
	- \bm{W}_{K,F}^-\sqrt{-\bm{\varLambda}_{K,F}^-}\bm{G}_{K,F}^{-}.
	\label{eqn:IFace:numFlux}
\end{align}
Therefore, quantities corresponding to outward information are computed by using the values of the numerical fields on the current element $K$. Meanwhile, quantities corresponding to incoming information are based on values associated with the neighboring element $K'$.

%%%%%%%%%%%%%%%%%%%%%%%%%%%%%%%%%%%%%%%%%%%%%%%%%%%%%%%%%%%%%%%%%%%%%%%%%%%%%%%%%%%%%%%%

\subsubsection*{Boundary faces}

At boundary faces, $F\subset\partial\Omega$, the numerical fluxes can still be written in terms of outgoing and incoming transmission variables as in equation \eqref{eqn:IFace:numFlux}.
Since the outgoing transmission variable $\bm{G}_{K,F}^{+}$ defined in \eqref{eqn:IFace:transVarOut} depends only on the fields $\bm{U}_K$ in the current element $K$, the definition \eqref{eqn:IFace:transVarOut} of $\bm{G}_{K,F}^{+}$ remain valid for boundary faces.

However, the incoming transmission variable $\bm{G}_{K,F}^-$ used in \eqref{eqn:IFace:numFlux} cannot be written in terms of a solution in a neighboring element.
Instead, the incoming variable $\bm{G}_{K,F}^-$ is directly specified by the generic boundary condition of system \eqref{eqn:sysHarm}.
For a boundary face $F\subset\Gamma_i$ of an element $K$, it is written
\begin{align}
	\bm{G}^{-}_{K,F}
	:= \bm{\alpha}_i \bm{G}^{+}_{K,F} + \bm{S}_{\mathrm{sur},i},
	\qquad
	\text{if } F\subset\Gamma_i.
	\label{bc_transmission}
\end{align}
Therefore, the numerical flux \eqref{eqn:IFace:numFlux} on a boundary face depends solely on $\bm{S}_{\mathrm{sur},i}$ and $\bm{G}_{K,F}^{+}$, and the latter is defined in terms of $\bm{U}_K$ by \eqref{eqn:IFace:transVarOut}.

%%%%%%%%%%%%%%%%%%%%%%%%%%%%%%%%%%%%%%%%%%%%%%%%%%

\section{Hybridization with transmission variables}
\label{sec:CHDG}

Hybridizing a DG formulation consists in introducing additional unknowns on the mesh skeleton in a way that decouples the physical unknowns defined within different elements.
The physical unknowns can then be easily eliminated by solving small, local problems, resulting in a \emph{hybridized system} associated with the additional unknowns.
The physical unknowns can be recovered in a post-processing step after solving this system.

In standard hybridized discontinuous Galerkin (HDG) methods, a numerical trace is generally used as the additional unknown.
In this work, the approach of the CHDG method is extended to a general system \eqref{eqn:sysHarm} by using the incoming transmission variables as additional unknowns on the mesh skeleton.
This change does not affect the final physical solution, but it leads to a hybridized system with improved properties, in particular for iterative solvers.
The hybridization procedure and the hybridized formulation are described in Section \ref{sec:hybForm}, and the key properties are proved in Section \ref{sec:hybProp}.

%%%%%%%%%%%%%%%%%%%%%%%%%%%%%%%%%%%%%%%%%%%%%%%%%%%%%%%%%%%%%%%%%%%%%%%%%%%%%%%%%%%%%%%%

\subsection{Hybridized formulation}
\label{sec:hybForm}

To formulate the problem, we introduce two spaces corresponding to the outgoing and incoming transmission variables on every face of every element:
\begin{align}
	\bm{\mathcal{G}}^+_h :=\prod_{K\in\mathcal{T}_h}\prod_{F\in\mathcal{F}_K}\bm{\mathcal{P}}^{{n}^+_{K,F}}_\mathrm{p}(F)
	\qquad\text{ and }\qquad
	\bm{\mathcal{G}}^-_h := \prod_{K\in\mathcal{T}_h}\prod_{F\in\mathcal{F}_K}\bm{\mathcal{P}}^{{n}^-_{K,F}}_\mathrm{p}(F),
\end{align}
where $n^+_{K,F}$ and $n^-_{K,F}$ are the numbers of strictly positive and strictly negative eigenvalues, respectively, of the flux matrix $\bm{F}_{K,F}$ associated with the normal $\bm{n}_{K,F}$.
These spaces are equipped with the following norm
\begin{align}
	\|\bm{G}_h\|_{\partial\mathcal{T}_h} := \sqrt{\sum_{K\in\mathcal{T}_h}\sum_{F\in\mathcal{F}_K}\|\bm{G}_{K,F}\|^2_F},
\end{align}
where $\|\cdot\|_F:=\sqrt{\langle\cdot,\cdot\rangle_F}$ is the usual $L^2(F)$ norm defined for vectors with the appropriate size.

We introduce the additional unknown field $\bm{G}_h^-\in\bm{\mathcal{G}}^-_h$ corresponding to the incoming variable defined by equation \eqref{eqn:IFace:transVarIn} at each interior face $F\not\in\partial\Omega$ and by equation \eqref{bc_transmission} at each boundary face $F\in\partial\Omega$.
Problem \ref{pbm:DG_general} is augmented with an additional global equation that collects a weak form of these definitions.
The resulting formulation can be written as follows:
\begin{problem}
	\label{pbm:CHDG_general}
	Find $(\bm{U}_h,\bm{G}_h^{-})\in  \bm{\mathcal{V}}_h^l\times\bm{\mathcal{G}}^-_h$ such that, for all $(\bm{V}_h,\bm{\xi}_h)\in \bm{\mathcal{V}}_h^l\times\bm{\mathcal{G}}^-_h$,
	\begin{align}
		- \imath\omega \big(\bm{U}_h,\bm{V}_h\big)_{\mathcal{T}_h}
		- \sum_{j=1}^3 \Big(\bm{A}_{j}\bm{U}_h,\frac{\partial\bm{V}_h}{\partial x_j}\Big)_{\mathcal{T}_h}
		+ \big\langle\bm{F}^+\bm{U}_h - \bm{W}^-\sqrt{-\bm{\varLambda}^-}\bm{G}_h^{-} , \bm{V}_h\big\rangle_{\partial\mathcal{T}_h}
		= 0
	\end{align}
	and
	\begin{align}
		\big\langle \bm{G}^{-}_h-\Pi(\bm{G}^{+}(\bm{U}_h)) , \bm{\xi}_h \big\rangle_{\partial\mathcal{T}_h}
		&= \big\langle \bm{b} , \bm{\xi}_h \big\rangle_{\partial\mathcal{T}_h},
	\end{align}
	with $\bm{G}^{+}(\bm{U}_h) := \sqrt{\bm{\varLambda}^+}(\bm{W}^+)^\dagger\bm{U}_h$.
\end{problem}
\noindent
In the second equation, we have introduced the \emph{global exchange operator} $\Pi: \bm{\mathcal{G}}^+_h\rightarrow\bm{\mathcal{G}}^-_h$ and the \emph{global right-hand side} $\bm{b}$. For each face $F$ of each element $K$, and for all $\bm{G}^{+}\in\bm{\mathcal{G}}^+_h$, they are defined as
\begin{align}
	\Pi(\bm{G}^{+})|_{K,F} =
	\begin{cases}
		\bm{G}^{+}_{K',F}
		& \text{if } F\not\subset\partial\Omega, \\
		\bm{\alpha}_i\bm{G}^{+}_{K,F}
		& \text{if } F\subset\Gamma_i,
	\end{cases}
	\qquad\text{and}\qquad
	\bm{b}|_{K,F} =
	\begin{cases}
		\mathbf{0}
		& \text{if } F\not\subset\partial\Omega, \\
		\bm{S}_{\mathrm{sur},i}
		& \text{if } F\subset\Gamma_i.
	\end{cases}
\end{align}
The operator $\Pi$ enforces the weak coupling of the element-wise problems at the interior faces and the boundary conditions on the boundary faces.
For interior faces, it simply swaps the outgoing and incoming transmission variables of the two neighboring elements.
For boundary faces, its definition is modified to enforce the corresponding boundary conditions.

To obtain the hybridized problem, the physical field $\bm{U}_h$ is eliminated from Problem \ref{pbm:CHDG_general} by solving local element-wise problems where the incoming transmission variable $\bm{G}_h^{-}$ is considered as a given datum.
For each element $K$, the local problem reads as follows.
\begin{problem}[Local problem]
	Find $\bm{U}_K\in\bm{\mathcal{P}}^l_\mathrm{p}(K)$ such that, for all $\bm{V}_K\in\bm{\mathcal{P}}^l_\mathrm{p}(K)$,
	\begin{multline}
		- \imath\omega\big(\bm{U}_K,\bm{V}_K\big)_K
		- \sum_{j=1}^3 \Big(\bm{A}_{j}\bm{U}_K,\frac{\partial\bm{V}_K}{\partial x_j}\Big)_K
		+ \sum_{F\in\mathcal{F}_K} \big\langle\bm{F}_{K,F}^+\bm{U}_K , \bm{V}_K\big\rangle_{F}
		\\
		= \sum_{F\in\mathcal{F}_K} \big\langle\bm{W}_{K,F}^-\sqrt{-\bm{\varLambda}_{K,F}^-}\bm{G}_{K,F}^{-} , \bm{V}_K\big\rangle_{F},
		\label{pbmloc_dg_general}
	\end{multline}
	for a given surface datum $\bm{G}_{K,F}^{-}\in\bm{\mathcal{P}}^l_\mathrm{p}(K)$ for all $F\in\mathcal{F}_K$.
	\label{loc_pbm:chdg_general}
\end{problem}

The hybridized CHDG problem can be written in a convenient abstract form by introducing the \emph{global scattering operator} $\TS:\bm{\mathcal{G}}^-_h\rightarrow \bm{\mathcal{G}}^+_h$ defined such that, for each face $F$ of each element $K$,
\begin{align}
	\TS(\bm{G}^{-}_h)|_{K,F} := \sqrt{\bm{\varLambda}_{K,F}^+}(\bm{W}_{K,F}^+)^\dagger\bm{U}_K,
	\label{scat_oper_general}
\end{align}
where $\bm{U}_K$ is the solution of Problem \ref{loc_pbm:chdg_general} with the variables $\{\bm{G}_{K,F}^{-}\}_{F\in\mathcal{F}_K}$ contained in $\bm{G}_h^{-}$ as a surface data.
This operator can be interpreted as an \emph{``incoming to outgoing''} operator.
By removing the physical unknowns from Problem \ref{pbm:CHDG_general}  and using $\TS$, we obtain the following formulation.
\begin{problem}[Hybridized formulation \texttt{I}]
	\label{pbm:red1_general}
	Find $\bm{G}_h^{-}\in\bm{\mathcal{G}}^-_h$ such that, for all $\bm{\xi}_h\in\bm{\mathcal{G}}^-_h$,
	\begin{align}
		\big\langle \bm{G}_h^{-},\bm{\xi}_h\big\rangle_{\partial\mathcal{T}_h}-\big\langle \Pi(\TS(\bm{G}_h^{-})),\bm{\xi}_h\big\rangle_{\partial\mathcal{T}_h}&=\big\langle \bm{b},\bm{\xi}_h\big\rangle_{\partial\mathcal{T}_h}.
	\end{align}
\end{problem}
\noindent
By introducing the \emph{identity operator} $\TI$ on $\bm{\mathcal{G}}^-_h$ and the \emph{global projected right-hand side vector} $\bm{b}_h\in\bm{\mathcal{G}}^-_h$ defined such that $\langle\bm{b}_h,\bm{\xi}_h\rangle_{\partial\mathcal{T}_h} = \langle\bm{b},\bm{\xi}_h\rangle_{\partial\mathcal{T}_h}$ for all $\bm{\xi}_h\in\bm{\mathcal{G}}^-_h$, it can be rewritten as follows.
\begin{problem}[Hybridized formulation \texttt{II}]
	\label{pbm:red2_general}
	Find $\bm{G}_h^{-}\in\bm{\mathcal{G}}^-_h$ such that
	$(\TI-\Pi\TS)\bm{G}_h^{-} = \bm{b}_h$,
\end{problem}

%%%%%%%%%%%%%%%%%%%%%%%%%%%%%%%%%%%%%%%%%%%%%%%%%%%%%%%%%%%%%%%%%%%%%%%%%%%%%%%%%%%%%%%%

\subsection{Properties of the local and hybridized problems}
\label{sec:hybProp}

In this section, we prove that the local problems (Problem \ref{loc_pbm:chdg_general}) are well-posed, and that the operator $\Pi\TS$ is a strict contraction for the norm $\|\cdot\|_{\partial\mathcal{T}_h}$.
Consequently, Problems \ref{pbm:red1_general} and \ref{pbm:red2_general} are also well-posed and equivalent to Problem \ref{pbm:CHDG_general}.

\begin{theorem} [Well-posedness of the local problem]
	Problem \ref{loc_pbm:chdg_general} is well-posed.
	\label{well_posedness_loc_pbm_general}
\end{theorem}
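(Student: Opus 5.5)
Problem \ref{loc_pbm:chdg_general} is a square linear system on the finite-dimensional space $\bm{\mathcal{P}}^l_\mathrm{p}(K)$. Well-posedness therefore reduces to uniqueness: I will show that the homogeneous problem, with $\bm{G}_{K,F}^-=\mathbf{0}$ for all $F\in\mathcal{F}_K$, only admits $\bm{U}_K=\mathbf{0}$. Existence then follows by the rank–nullity theorem. Continuous dependence on the data is automatic in finite dimension, and could be quantified by the same energy identity if desired.

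The main step is an energy identity obtained by testing with $\bm{V}_K=\bm{U}_K$ and taking real parts. For the volume term, I use that each $\bm{A}_j$ is real and symmetric, so integration by parts gives
\begin{align}
2\,\Re \sum_{j=1}^3\Big(\bm{A}_j\bm{U}_K,\frac{\partial\bm{U}_K}{\partial x_j}\Big)_K
= \sum_{j=1}^3\int_K \frac{\partial}{\partial x_j}\big(\bm{U}_K^\dagger\bm{A}_j\bm{U}_K\big)\,d\bm{x}
= \sum_{F\in\mathcal{F}_K}\big\langle \bm{F}_{K,F}\bm{U}_K,\bm{U}_K\big\rangle_F .
\end{align}
The mass term $-\imath\omega\|\bm{U}_K\|_K^2$ is purely imaginary, so it drops out of the real part. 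Splitting $\bm{F}_{K,F}=\bm{F}^+_{K,F}+\bm{F}^-_{K,F}$ as in \eqref{eqn:fluxSplitting}, the real part of the homogeneous equation becomes
\begin{align}
\sum_{F\in\mathcal{F}_K}\Big(\tfrac12\big\langle \bm{F}^+_{K,F}\bm{U}_K,\bm{U}_K\big\rangle_F-\tfrac12\big\langle \bm{F}^-_{K,F}\bm{U}_K,\bm{U}_K\big\rangle_F\Big)=0 .
\end{align}
Both terms in the sum are nonnegative, because $\bm{F}^+$ is positive semidefinite and $-\bm{F}^-$ is positive semidefinite. This is the local analogue of \eqref{eqn:consEnergyFlux}. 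Hence $(\bm{W}^\pm_{K,F})^\dagger\bm{U}_K=\mathbf{0}$ on every face, which means $\bm{F}_{K,F}\bm{U}_K=\mathbf{0}$ on $\partial K$.

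I expect the main obstacle to be concluding from here, because zero eigenvalues of $\bm{F}_{K,F}$ (for instance the vorticity modes) leave components of $\bm{U}_K$ on $\partial K$ uncontrolled. I will return to the full equation rather than the boundary traces alone. Since $\bm{F}^+_{K,F}\bm{U}_K=\mathbf{0}$ on each face, integrating by parts back converts the equation into the strong form $-\imath\omega\bm{U}_K+\sum_j\bm{A}_j\partial_j\bm{U}_K=\bm{R}$, where $\bm{R}$ is orthogonal to $\bm{\mathcal{P}}^l_\mathrm{p}(K)$. But $\bm{R}$ is itself a polynomial of degree at most $\mathrm{p}$, so $\bm{R}=\mathbf{0}$, and the equation holds exactly in $K$.

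Now let $\mathrm{d}\le \mathrm{p}$ be the degree of $\bm{U}_K$ and look at its top-degree homogeneous part. The operator $\sum_j\bm{A}_j\partial_j$ lowers the degree, so in degree $\mathrm{d}$ the equation reduces to $-\imath\omega\bm{U}_K^{(\mathrm{d})}=\mathbf{0}$. For $\omega\neq0$ this forces $\bm{U}_K^{(\mathrm{d})}=\mathbf{0}$, and iterating down the degrees gives $\bm{U}_K=\mathbf{0}$. This argument does not even need the boundary information, so I would double-check whether it alone suffices (integrating by parts gives the strong form with a boundary remainder, which is where the energy step is used). I will assume $\omega\neq0$, as implicit in the time-harmonic setting.
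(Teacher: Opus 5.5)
Your proof is correct and follows essentially the paper's route. The energy identity (your real-part computation is equivalent to the paper's conjugate-and-sum step) forces $\bm{F}^{\pm}_{K,F}\bm{U}_K=\mathbf{0}$ on every face, so the weak form reduces to the strong equation $-\imath\omega\bm{U}_K+\sum_{j}\bm{A}_j\partial_j\bm{U}_K=\mathbf{0}$ in $K$, and your top-degree argument (for $\omega\neq0$) supplies the justification the paper omits when it simply asserts that this equation has no nonzero polynomial solutions.
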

\begin{proof}
	We have to prove that, if $\bm{G}_{K,F}^{-}=\mathbf{0}$ on each face $F\in\mathcal{F}_K$, the unique solution of Problem \ref{loc_pbm:chdg_general} is $\bm{U}_K=\bm{0}$.
	For the sake of brevity, the subscripts $K$ and $F$ are omitted for the local fields, the coefficient matrices and the outgoing unit normal. Taking $\bm{V}=\bm{U}$ in \eqref{pbmloc_dg_general} gives
	\begin{align}
		-\imath\omega(\bm{U},\bm{U})_K
		- \sum_{j=1}^3 \Big(\bm{A}_{j}\bm{U},\frac{\partial\bm{U}}{\partial x_j}\Big)_K
		+ \langle\bm{F}^+\bm{U},\bm{U}\rangle_{\partial K} = 0.
	\end{align}
	Integrating by parts the second term, taking the complex conjugate of the resulting equation, recalling $\sum_j n_j\bm{A}_j = \bm{F} = \bm{F}^++\bm{F}^-$, and using the symmetry of these matrices yields
	\begin{align}
		+\imath\omega(\bm{U},\bm{U})_K+\Big(\bm{A}_{j}\bm{U},\frac{\partial\bm{U}}{\partial x_j}\Big)_K-\langle\bm{F}^-\bm{U},\bm{U}\rangle_{\partial K}=0.
	\end{align}
	Summing the two previous equations gives $\langle (\bm{F}^+-\bm{F}^-)\bm{U},\bm{U}\rangle_{\partial K} = 0$, which can be written
	\begin{equation}
		\big\|\sqrt{\bm{\varLambda}^+}(\bm{W}^+)^\dagger\bm{U}\big\|_{\partial K}^2
		+ \big\|\sqrt{-\bm{\varLambda}^-}(\bm{W}^-)^\dagger\bm{U}\big\|_{\partial K}^2 = 0.
	\end{equation}
	Therefore, we have $\sqrt{\pm\bm{\varLambda}^\pm}(\bm{W}^\pm)^\dagger\bm{U} = \mathbf{0}$, which implies that $\bm{F}^+\bm{U} = \mathbf{0}$ and $\bm{F}^-\bm{U} = \mathbf{0}$ on each face $F\in\mathcal{F}_K$.
	Using this result in Problem \ref{loc_pbm:chdg_general}, $\bm{U}$ must satisfy, for all $\bm{V}\in\bm{\mathcal{P}}_{\mathrm{p}}^l(K)$,
	\begin{align}
		-\i\omega(\bm{U},\bm{V})_K
		- \sum_{j=1}^3\Big(\bm{A}_j\bm{U},\frac{\partial\bm{V}}{\partial x_j}\Big)_K = 0.
	\end{align}
	Therefore, $\bm{U}$ is a solution of the strong form of the problem.
	Since there are no non-zero polynomial solutions to this problem, we have $\bm{U} = \mathbf{0}$, which yields the result.
\end{proof}

The strict contraction of $\Pi\TS$ is a consequence of the strict contraction of $\TS$ and the contraction of $\Pi$.
To prove this result, we need the following lemma.
\begin{lemma}
	(i) The solution of Problem \ref{loc_pbm:chdg_general} verifies
	\begin{multline}
		\sum_{F\in\mathcal{F}_K} \big\|\sqrt{\bm{\varLambda}_{K,F}^+}(\bm{W}_{K,F}^+)^\dagger\bm{U}_{K}\big\|_F^2
		+ \sum_{F\in\mathcal{F}_K} \big\|\sqrt{-\bm{\varLambda}_{K,F}^-}(\bm{W}_{K,F}^{-})^\dagger\bm{U}_{K}-\bm{G}_{K,F}^{-}\big\|_F^2
		\\
		= \sum_{F\in\mathcal{F}_K} \big\|\bm{G}_{K,F}^{-}\big\|_F^2.
		\label{eqn:lem2_general}
	\end{multline}
	(ii) The second term on the left-hand side of \eqref{eqn:lem2_general} vanishes if and only if $\bm{G}_{K,F}^{-}=\mathbf{0}$.
	\label{well-pos-loc2_general}
\end{lemma}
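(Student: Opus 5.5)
The plan is to reuse the energy identity from the proof of Theorem~\ref{well_posedness_loc_pbm_general}, now keeping the right-hand side. I will drop the subscripts $K,F$ for brevity and write $\bm{G}^+_U := \sqrt{\bm{\varLambda}^+}(\bm{W}^+)^\dagger\bm{U}$ and $\bm{G}^-_U := \sqrt{-\bm{\varLambda}^-}(\bm{W}^-)^\dagger\bm{U}$. With this notation $\bm{F}^\pm = \pm \bm{W}^\pm \sqrt{\pm\bm{\varLambda}^\pm}\sqrt{\pm\bm{\varLambda}^\pm}(\bm{W}^\pm)^\dagger$, so that $\langle\bm{F}^+\bm{U},\bm{U}\rangle_F = \|\bm{G}^+_U\|_F^2$ and $\langle\bm{F}^-\bm{U},\bm{U}\rangle_F = -\|\bm{G}^-_U\|_F^2$.

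\emph{Step 1.} I will take $\bm{V}_K=\bm{U}_K$ in \eqref{pbmloc_dg_general}. The right-hand side becomes $\sum_F\langle \bm{G}^-,\bm{G}^-_U\rangle_F$, since $(\bm{W}^-\sqrt{-\bm{\varLambda}^-}\bm{G}^-)^\dagger\bm{U} = (\bm{G}^-_U)^\dagger\bm{G}^-$.

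\emph{Step 2.} As in the previous proof, I will integrate the volume term by parts, take the complex conjugate, and add the result to the original equation. The $\imath\omega$ terms cancel, and so do the volume derivative terms, by the symmetry of the $\bm{A}_j$. On the left this leaves $\langle(\bm{F}^+-\bm{F}^-)\bm{U},\bm{U}\rangle_{\partial K} = \sum_F(\|\bm{G}^+_U\|^2_F+\|\bm{G}^-_U\|^2_F)$. On the right it leaves $2\Re\sum_F\langle\bm{G}^-,\bm{G}^-_U\rangle_F$.

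\emph{Step 3.} I will then add $\sum_F\|\bm{G}^-\|_F^2$ to both sides and expand $\|\bm{G}^-_U-\bm{G}^-\|_F^2 = \|\bm{G}^-_U\|^2_F - 2\Re\langle\bm{G}^-,\bm{G}^-_U\rangle_F + \|\bm{G}^-\|^2_F$. Rearranging gives \eqref{eqn:lem2_general} exactly. The only care needed here is bookkeeping the conjugation when the integration-by-parts boundary term is combined with $\langle \bm{F}^+\bm{U},\bm{U}\rangle$.

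\emph{Step 4, part (ii).} If $\bm{G}^-=\mathbf{0}$ on every face, then Theorem~\ref{well_posedness_loc_pbm_general} gives $\bm{U}_K=\mathbf{0}$, and the second term vanishes trivially. The converse is the main obstacle. Suppose $\bm{G}^-_U=\bm{G}^-$ on every face. Then the local equation reads
\[
-\imath\omega(\bm{U},\bm{V})_K - \sum_j(\bm{A}_j\bm{U},\partial_j\bm{V})_K + \langle \bm{F}^+\bm{U},\bm{V}\rangle_{\partial K} = \langle \bm{W}^-\sqrt{-\bm{\varLambda}^-}\bm{G}^-_U,\bm{V}\rangle_{\partial K} = -\langle\bm{F}^-\bm{U},\bm{V}\rangle_{\partial K}.
\]
Hence the boundary terms combine into $\langle\bm{F}\bm{U},\bm{V}\rangle_{\partial K}$. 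Integrating by parts, they cancel the boundary contribution, which leaves $(-\imath\omega\bm{U}+\sum_j\bm{A}_j\partial_j\bm{U},\bm{V})_K=0$ for all $\bm{V}\in\bm{\mathcal{P}}^l_\mathrm{p}(K)$. Since the residual $-\imath\omega\bm{U}+\sum_j\bm{A}_j\partial_j\bm{U}$ itself lies in $\bm{\mathcal{P}}^l_\mathrm{p}(K)$, it vanishes. So $\bm{U}$ is a polynomial solution of the homogeneous strong problem, and the same argument as in the proof of Theorem~\ref{well_posedness_loc_pbm_general} gives $\bm{U}=\mathbf{0}$. To make that argument explicit: the top-degree homogeneous part of $\bm{U}$ must satisfy $-\imath\omega \bm{U}_{\mathrm{top}}=\mathbf{0}$, since the derivative terms lower the degree, and descending induction on the degree gives $\bm{U}=\mathbf{0}$. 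Finally $\bm{G}^- = \bm{G}^-_U = \mathbf{0}$, which proves the converse.
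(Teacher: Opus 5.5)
Your proof is correct and follows the paper's argument essentially step for step: you test with $\bm{V}_K=\bm{U}_K$, integrate by parts, add the complex conjugate and complete the square for (i); for (ii) you observe that $\bm{G}^-=\sqrt{-\bm{\varLambda}^-}(\bm{W}^-)^\dagger\bm{U}$ turns the boundary terms into $\langle\bm{F}\bm{U},\bm{V}\rangle_{\partial K}$, so that $\bm{U}$ is a polynomial solution of the homogeneous strong problem (your top-degree argument, which tacitly assumes $\omega$ real and nonzero as the paper does, only makes explicit what the paper asserts). One cosmetic slip: in Step 1, $(\bm{W}^-\sqrt{-\bm{\varLambda}^-}\bm{G}^-)^\dagger\bm{U}$ equals $(\bm{G}^-)^\dagger\bm{G}^-_U$, the complex conjugate of what you wrote, but this is harmless since only $2\Re\langle\bm{G}^-,\bm{G}^-_U\rangle_F$ enters the identity.
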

\begin{proof}
	For brevity, the subscripts $K$ and $F$ are omitted for the local fields, the coefficient matrices, the unit outgoing normal and the surface data.
	
	\emph{(i)} We proceed as in the proof of Theorem \ref{well_posedness_loc_pbm_general}.
	Taking \eqref{pbmloc_dg_general} with $\bm{V}=\bm{U}$ and summing it with the equation obtained by using an integration by parts and the complex conjugate of \eqref{pbmloc_dg_general} gives
	\begin{align}
		\sum_{F\in\mathcal{F}_K} \langle(\bm{F}^+-\bm{F}^-)\bm{U},\bm{U}\rangle_F
		= \sum_{F\in\mathcal{F}_K} \langle\bm{W}^{-}\sqrt{-\bm{\varLambda}^-}\bm{G}^{-},\bm{U}\rangle_F
		+ \sum_{F\in\mathcal{F}_K}\langle\bm{U},\bm{W}^{-}\sqrt{-\bm{\varLambda}^-}\bm{G}^{-}\rangle_F.
	\end{align}
	Using the definition of $\bm{F}^+$ and $\bm{F}^-$ leads to
	\begin{multline}
		\sum_{F\in\mathcal{F}_K} \left(
		\big\langle\bm{W}^+\bm{\Lambda}^+(\bm{W}^+)^\dagger\bm{U},\bm{U}\big\rangle_F -
		\big\langle\bm{W}^-\bm{\Lambda}^-(\bm{W}^-)^\dagger\bm{U},\bm{U}\big\rangle_F\right)
		\\
		=
		\sum_{F\in\mathcal{F}_K} \langle\bm{G}^{-},\sqrt{-\bm{\Lambda}^-}(\bm{W}^-)^\dagger\bm{U}\rangle_F
		+ \sum_{F\in\mathcal{F}_K} \langle\sqrt{-\bm{\Lambda}^-}(\bm{W}^-)^\dagger\bm{U},\bm{G}^{-}\rangle_F,
	\end{multline}
	and then
	\begin{align}
		\sum_{F\in\mathcal{F}_K} \left(
		\big\|\sqrt{\bm{\varLambda}^+}(\bm{W}^+)^\dagger\bm{U}\big\|^2_F
		+ \big\langle\sqrt{-\bm{\varLambda}^-}(\bm{W}^-)^\dagger\bm{U}-\bm{G}^{-},\sqrt{-\bm{\varLambda}^-}(\bm{W}^-)^\dagger\bm{U}-\bm{G}^{-}\big\rangle_F
		- \big\|\bm{G}^{-}\big\|^2_F
		\right) = 0,
	\end{align}
	which gives the result.
	
	\emph{(ii)} The second term on the left-hand side of \eqref{eqn:lem2_general} vanishes if and only if $\bm{G}^{-}=\sqrt{-\bm{\varLambda}^-}(\bm{W}^-)^\dagger\bm{U}$ on each face $F\in\mathcal{F}_K$.
	Using this relation in Problem \ref{loc_pbm:chdg_general}, $\bm{U}$ must satisfy, for all $\bm{V}\in\bm{\mathcal{P}}^{l}_\mathrm{p}(K)$,
	\begin{align}
		-\i\omega(\bm{U},\bm{V})_K
		- \sum_{j=1}^3\Big(\bm{A}_{j}\bm{U},\frac{\partial\bm{V}}{\partial x_j}\Big)_K
		+ \langle\bm{F}\bm{U},\bm{V}\rangle_{\partial K} = 0.
	\end{align}
	Therefore, $\bm{U}$ solves the system in strong form.
	Because there is no non-zero polynomial solution to the previous equation, we have $\bm{U}=\mathbf{0}$, and then $\bm{G}^{-}=\mathbf{0}$, which yields the results.
	The converse statement is direct, because the local problem is well-posed.
\end{proof}

\begin{theorem}
	\label{thm:opS:cont_general}
	The scattering operator $\TS$ is a strict contraction, i.e.
	\begin{align}
		\|\TS(\bm{G}_h^{-})\|_{\partial\mathcal{T}_h} < \|\bm{G}_h^{-}\|_{\partial\mathcal{T}_h}, \quad \forall \bm{G}_h^{-}\in\bm{\mathcal{G}}^-_h\backslash\{\mathbf{0}\}.
	\end{align}
\end{theorem}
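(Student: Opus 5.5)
The plan is to derive the strict contraction directly from Lemma \ref{well-pos-loc2_general}, applied element by element and then summed. Fix $\bm{G}_h^{-}\in\bm{\mathcal{G}}^-_h\backslash\{\mathbf{0}\}$. For each element $K$, let $\bm{U}_K$ be the solution of Problem \ref{loc_pbm:chdg_general} with data $\{\bm{G}^-_{K,F}\}_{F\in\mathcal{F}_K}$; it exists and is unique by Theorem \ref{well_posedness_loc_pbm_general}. By the definition \eqref{scat_oper_general}, the first sum on the left-hand side of \eqref{eqn:lem2_general} is exactly $\sum_{F\in\mathcal{F}_K}\|\TS(\bm{G}_h^-)|_{K,F}\|_F^2$. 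Identity (i) then gives, on each $K$,
\begin{align}
\sum_{F\in\mathcal{F}_K}\|\TS(\bm{G}_h^-)|_{K,F}\|_F^2
= \sum_{F\in\mathcal{F}_K}\|\bm{G}^-_{K,F}\|_F^2 - R_K,
\end{align}
where $R_K\ge0$ is the second (nonnegative) sum on the left-hand side of \eqref{eqn:lem2_general}.

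Next, I sum over $K\in\mathcal{T}_h$. By the definition of $\|\cdot\|_{\partial\mathcal{T}_h}$, this yields
\begin{align}
\|\TS(\bm{G}_h^-)\|_{\partial\mathcal{T}_h}^2 = \|\bm{G}_h^-\|_{\partial\mathcal{T}_h}^2 - \sum_{K} R_K .
\end{align}
The non-strict inequality $\|\TS(\bm{G}_h^-)\|_{\partial\mathcal{T}_h}\le\|\bm{G}_h^-\|_{\partial\mathcal{T}_h}$ is therefore immediate.

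For strictness, since $\bm{G}_h^-\neq\mathbf{0}$ there is at least one element $K_0$ whose local data $\{\bm{G}^-_{K_0,F}\}_F$ is not identically zero. By part (ii) of Lemma \ref{well-pos-loc2_general}, $R_{K_0}>0$. This gives $\sum_K R_K>0$, and hence the strict inequality.

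The only point needing care is how part (ii) is read. The lemma's "vanishes iff $\bm{G}_{K,F}^-=\mathbf{0}$" has to be understood collectively over all faces of $K$: $R_K=0$ forces the local data to vanish on every face. That is exactly what its proof establishes, since $R_K=0$ yields a polynomial strong solution, which is zero, and so all $\bm{G}^-_{K,F}=\mathbf{0}$. This is the step I would state explicitly. The rest is bookkeeping.

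Note that no uniform contraction constant is claimed, only strictness. Uniformity would follow anyway, by finite dimensionality and compactness of the unit sphere.
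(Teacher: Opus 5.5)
Your proposal is correct and follows essentially the same route as the paper. You apply identity (i) of Lemma \ref{well-pos-loc2_general} element by element, identify its first sum with the local contribution to $\|\TS(\bm{G}_h^-)\|^2_{\partial\mathcal{T}_h}$, sum over $K$, and invoke part (ii) on an element with nonzero data to obtain strictness. Your explicit collective reading of part (ii) matches how the paper uses it (``the equality holds if and only if $\bm{G}^-_{K,F}=\mathbf{0}$ for all $F\in\mathcal{F}_K$'').
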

\begin{proof}
	Let $\bm{G}_h^{-}\in\bm{\mathcal{G}}^-_h\backslash\{\mathbf{0}\}$.
	By Lemma \ref{well-pos-loc2_general}, we have
	\begin{align}
		\sum_{F\in\mathcal{F}_K}\|\sqrt{\bm{\varLambda}^+}(\bm{W}^+)^\dagger\bm{U}\|_F^2&\le\sum_{F\in\mathcal{F}_K}\|\bm{G}_{K,F}^{-}\|_F^2.
	\end{align}
	The equality holds if and only if $\bm{G}_{K,F}^{-}=\mathbf{0}$ for all $F\in\mathcal{F}_K$.
	By using the definition \eqref{scat_oper_general} of $\TS$, we can write
	\begin{align}
		\sum_{F\in\mathcal{F}_K}\|\TS(\bm{G}_h^{-})|_{K,F}\|^2_F\le\sum_{F\in\mathcal{F}_K}\|\bm{G}_{K,F}^{-}\|^2_F.
	\end{align}
	Summing this estimate over all $K\in\mathcal{T}_h$ and noticing that the equality can not globally hold because $\bm{G}_h^{-}\ne\mathbf{0}$ give the result.
\end{proof}

\begin{theorem}
	\label{thm:opP:cont_general}
	The exchange operator $\Pi$ is a contraction, i.e.
	\begin{align}
		\|\Pi(\bm{G}^{+})\|_{\partial\mathcal{T}_h} \le \|\bm{G}^{+}\|_{\partial\mathcal{T}_h}, \quad \forall \bm{G}^{+}\in\bm{\mathcal{G}}^+_h.
	\end{align}
\end{theorem}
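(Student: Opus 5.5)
The plan is to expand the squared norm face by face and treat interior and boundary faces separately. By definition,
\begin{align}
	\|\Pi(\bm{G}^{+})\|_{\partial\mathcal{T}_h}^2
	= \sum_{K\in\mathcal{T}_h}\sum_{F\in\mathcal{F}_K} \|\Pi(\bm{G}^{+})|_{K,F}\|_F^2 .
\end{align}
I will split the double sum into pairs $(K,F)$ with $F\not\subset\partial\Omega$ and pairs with $F\subset\Gamma_i$ for some $i$.

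\textbf{Interior faces.} Each interior face $F$ is shared by exactly two elements $K$ and $K'$ (conforming mesh). Since $\bm{n}_{K',F}=-\bm{n}_{K,F}$, we have $\bm{F}_{K',F}=-\bm{F}_{K,F}$. Hence $n^-_{K,F}=n^+_{K',F}$, so $\Pi(\bm{G}^+)|_{K,F}=\bm{G}^+_{K',F}$ lives in the correct space. The map $(K,F)\mapsto(K',F)$ is an involution on the set of interior element–face pairs. Therefore
\begin{align}
	\sum_{K}\sum_{F\in\mathcal{F}_K,\,F\not\subset\partial\Omega}\|\bm{G}^+_{K',F}\|_F^2
	= \sum_{K}\sum_{F\in\mathcal{F}_K,\,F\not\subset\partial\Omega}\|\bm{G}^+_{K,F}\|_F^2 .
\end{align}
So $\Pi$ is an isometry on the interior part.

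\textbf{Boundary faces.} For $F\subset\Gamma_i$ we need $\|\bm{\alpha}_i\bm{G}^+_{K,F}\|_F\le\|\bm{G}^+_{K,F}\|_F$. Pointwise on $F$,
\begin{align}
	|\bm{\alpha}_i\bm{g}|_2^2=\bm{g}^\dagger\bm{\alpha}_i^\dagger\bm{\alpha}_i\bm{g}\le \lambda_{\max}(\bm{\alpha}_i^\dagger\bm{\alpha}_i)\,|\bm{g}|_2^2\le|\bm{g}|_2^2 .
\end{align}
Here $\bm{\alpha}_i^\dagger\bm{\alpha}_i$ is Hermitian positive semidefinite, so the Rayleigh quotient bound applies, and assumption \eqref{eqn:BChyp} gives $\lambda_{\max}\le 1$. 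Integrating over $F$ gives the face estimate.

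Since $\bm{\alpha}_i$ is a constant matrix, $\bm{\alpha}_i\bm{G}^+_{K,F}$ remains polynomial of degree at most $\mathrm{p}$. No projection is needed, and $\Pi$ indeed maps $\bm{\mathcal{G}}^+_h$ into $\bm{\mathcal{G}}^-_h$.

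\textbf{Conclusion and main obstacle.} Summing the interior identity and the boundary inequalities gives $\|\Pi(\bm{G}^{+})\|_{\partial\mathcal{T}_h}\le\|\bm{G}^{+}\|_{\partial\mathcal{T}_h}$. The only step requiring care is the bookkeeping for interior faces. One must check that the reindexing is a bijection, and that the dimensions match through $\bm{F}_{K',F}=-\bm{F}_{K,F}$, which is needed for the swap to be well defined. The rest is routine.
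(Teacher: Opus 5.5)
Your proof is correct and follows the same route as the paper, which states the result as a direct consequence of the definition of $\Pi$ and of the passivity condition \eqref{eqn:BChyp} on each $\Gamma_i$. You make explicit the two ingredients the paper leaves implicit: the involution $(K,F)\mapsto(K',F)$, which makes $\Pi$ an isometry on the interior faces, and the Rayleigh-quotient bound $|\bm{\alpha}_i\bm{g}|_2\le|\bm{g}|_2$ on the boundary faces.
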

\begin{proof}
	This is a direct consequence of the definition of $\Pi$ and of the passivity of the boundary, i.e.~$\bm{\alpha}_i$ satisfies Equation \eqref{eqn:BChyp} in each region $\Gamma_i$.
\end{proof}

\begin{corollary}
	\label{corol_general}
	The operator $\Pi\TS$ is a strict contraction, i.e.
	\begin{align}
		\|\Pi\TS(\bm{G}^{-}_h)\|_{\partial\mathcal{T}_h} < \|\bm{G}^{-}_h\|_{\partial\mathcal{T}_h}, \quad \forall \bm{G}^{-}_h\in\bm{\mathcal{G}}^-_h\backslash\{\mathbf{0}\}.
	\end{align}
\end{corollary}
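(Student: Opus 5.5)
The corollary follows by composing the two preceding estimates, so the plan is a short chaining argument. Fix $\bm{G}^{-}_h\in\bm{\mathcal{G}}^-_h\backslash\{\mathbf{0}\}$ and set $\bm{G}^{+}:=\TS(\bm{G}^{-}_h)\in\bm{\mathcal{G}}^+_h$. Theorem \ref{thm:opP:cont_general}, applied to this element, gives
\begin{align}
	\|\Pi\TS(\bm{G}^{-}_h)\|_{\partial\mathcal{T}_h} = \|\Pi(\bm{G}^{+})\|_{\partial\mathcal{T}_h} \le \|\bm{G}^{+}\|_{\partial\mathcal{T}_h} = \|\TS(\bm{G}^{-}_h)\|_{\partial\mathcal{T}_h}.
\end{align}
Theorem \ref{thm:opS:cont_general} then gives $\|\TS(\bm{G}^{-}_h)\|_{\partial\mathcal{T}_h} < \|\bm{G}^{-}_h\|_{\partial\mathcal{T}_h}$, since $\bm{G}^{-}_h\neq\mathbf{0}$. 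Chaining the non-strict inequality with the strict one yields the strict inequality in the statement.

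The only point to check is that Theorem \ref{thm:opP:cont_general} is genuinely available for the element $\bm{G}^{+}$. Its statement holds for all of $\bm{\mathcal{G}}^+_h$, but its proof is terse, so I would verify it face by face.

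On interior faces, $\Pi$ maps the pair $(\bm{G}^{+}_{K,F},\bm{G}^{+}_{K',F})$ to $(\bm{G}^{+}_{K',F},\bm{G}^{+}_{K,F})$. This is a permutation, so the sum of the two squared $L^2(F)$ norms is preserved. The dimensions also match: the positive eigenvalues for $\bm{n}_{K',F}=-\bm{n}_{K,F}$ are the negatives of the negative eigenvalues for $\bm{n}_{K,F}$, so $n^+_{K',F}=n^-_{K,F}$. The norms are therefore taken in the correct spaces.

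On boundary faces $F\subset\Gamma_i$, condition \eqref{eqn:BChyp} gives $\|\bm{\alpha}_i\bm{G}^{+}_{K,F}\|_2\le\|\bm{G}^{+}_{K,F}\|_2$ pointwise. Integrating over $F$ gives the $L^2(F)$ bound.

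Summing over all faces of all elements gives the contraction of $\Pi$, and hence the corollary.
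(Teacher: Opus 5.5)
Your proof is correct and matches the paper's argument, which obtains the corollary directly from Theorems \ref{thm:opS:cont_general} and \ref{thm:opP:cont_general} by chaining the non-strict bound for $\Pi$ with the strict bound for $\TS$. Your face-by-face check of the contraction of $\Pi$ is also correct: on interior faces the swap is a norm-preserving bijection with $n^+_{K',F}=n^-_{K,F}$, and on boundary faces the pointwise bound follows from \eqref{eqn:BChyp}. This fills in what the paper only states as following from the definition of $\Pi$ and passivity.
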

\begin{proof}
	This is a direct consequence of Theorems \ref{thm:opS:cont_general} and \ref{thm:opP:cont_general}.
\end{proof}

The strict contraction of $\TS$ can be interpreted as a numerical effect of the scheme, since this operator would be an isometry for the continuous problem.
In contrast, the contraction of the exchange operator $\Pi$ is related to the conservation of energy in the physical problem.
From \eqref{eqn:consEnergyFlux} we can see that the contraction of $\Pi$ implies that the outgoing energy is larger than, or equal to, the incoming energy.
In other words, the boundary conditions are passive, and the permutation operation $\Pi$ cannot add energy to the system.
It is an isometry if the boundary conditions preserve energy, i.e. when they do not absorb or add energy to the system.

%%%%%%%%%%%%%%%%%%%%%%%%%%%%%%%%%%%%%%%%%%%%%%%%%%

\section{Application to aeroacoustics}
\label{sec:aeroac}

In this section, the method is applied to an aeroacoustic problem, namely the propagation of linear waves in a uniform mean flow.
After introducing the propagation model, the equations are written in matrix form to fit into the general framework introduced in the previous sections.
Then, the upwind fluxes, the transmission variables et the operators used to define the CHDG hybridized system are derived.

%%%%%%%%%%%%%%%%%%%%%%%%%%%%%%%%%%%%%%%%%%%%%%%%%%%%%%%%%%%%%%%%%%%%%%%%%%%%%%%%%%%%%%%%

\subsection{Time-harmonic wave propagation in a uniform mean flow}

We consider the propagation of linear waves in a bounded domain $\Omega\subset\mathbb{R}^3$ with a constant density $\rho_0$, a constant sound speed $c_0$ and a uniform mean flow velocity $\bm{u}_0$.
The pressure perturbation $p(\bm{x})$ and the velocity perturbation $\bm{u}(\bm{x})$ are governed by the time-harmonic version of the linearized Euler equations (LEE).
We assume that the boundary of the domain is passive, and that the mean flow is subsonic (i.e.~$|\bm{u}_0| < c_0$).

On the boundary $\partial\Omega$, we define an orthonormal basis composed of the outward unit normal vector $\bm{n}$ and two unit tangential vectors $\{\bm{\tau}_1,\bm{\tau}_2\}$, as illustrated in Figure \ref{fig:aero:localFrame}.
We also introduce the tangent matrix $\bm{T} := \begin{bmatrix}\bm{\tau}_1 & \bm{\tau}_2\end{bmatrix}$ and the projection operator $\bm{\pi} := \bm{I}_3-\bm{n}\bm{n}^\dagger = \bm{TT}^\dagger$, see Figure \ref{fig:aero:projection}.
\begin{figure}[!b]
	\centering
	\begin{subfigure}[t]{0.3\textwidth}
		\centering
		\includegraphics{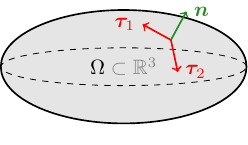}
		\caption{Normal and tangential unit vectors.}
		\label{fig:aero:localFrame}
	\end{subfigure}
	\hfill
	\begin{subfigure}[t]{0.3\textwidth}
		\centering
		\includegraphics{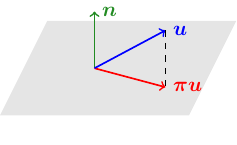}
		\caption{Projection $\bm{\pi}\bm{u}$ of a vector $\bm{u}$ on a plane surface.}
		\label{fig:aero:projection}
	\end{subfigure}
	\hfill
	\begin{subfigure}[t]{0.3\textwidth}
		\centering
		\includegraphics{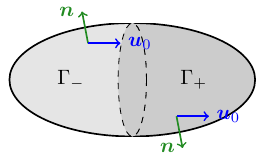}
		\caption{Outflow and inflow boundary.}
		\label{fig:aero:boundaries}
	\end{subfigure}
	\caption{Notation on the boundary (a)-(c) and on the projection operator (b).}
\end{figure}
The number of boundary conditions depends on the number of negative eigenvalues of the flux matrix.
As shown below, the numbers of boundary conditions will therefore depend on the sign of $\bm{u}_0\cdot\bm{n}$.
For this reason, we define two different partitions of the boundary $\partial\Omega$ into non-overlapping regions.
The first partition, $\Gamma_{p}\cup\Gamma_{u}\cup\Gamma_{\mathrm{imp}}$, is related to the boundary conditions enforcing either pressure, normal velocity or impedance.
The second partition, $\Gamma_+\cup\Gamma_-$, separates the region $\Gamma_+$ with outgoing or null mean flow (i.e.~$\bm{u}_0\cdot\bm{n}\geq0$) from the region $\Gamma_-$ with incoming mean flow (i.e.~$\bm{u}_0\cdot\bm{n}<0$), as shown in Figure \ref{fig:aero:boundaries}.

The problem reads as follows.
\begin{problem}\label{prb:aero}
	Find $(p,\bm{u}):\Omega\rightarrow \mathbb{C}\times\mathbb{C}^3$ such that
	\begin{align}
		\left\{
		\begin{aligned}
			-\i\omega p + \bm{u}_0\cdot\grad p + \rho_0c_0^2 \div\bm{u}
			& = s_\mathrm{vol}
			&
			& \text{in } \Omega,
			\\
			-\i\omega\rho_0c_0\bm{u} + c_0\grad p + \rho_0c_0(\bm{u}_0\cdot\grad)\bm{u}
			& = \mathbf{0}_3
			&
			& \text{in } \Omega,
			\\
			p
			& = s_{p}
			&
			& \text{on } \Gamma_{p},
			\\
			\bm{u}\cdot\bm{n}
			& = s_u
			&
			& \text{on } \Gamma_{u},
			\\
			p-\rho_0c_0\bm{u}\cdot\bm{n}
			& = s_{\mathrm{imp}}
			&
			& \text{on } \Gamma_{\mathrm{imp}},
			\\
			\bm{T}^\dagger\bm{u}
			& = \bm{s}_-
			&
			& \text{on } \Gamma_-,
		\end{aligned}
		\right.
	\end{align}
	for given data
	$s_\mathrm{vol}:\Omega\rightarrow\mathbb{C}$,
	$s_{p}: \Gamma_{p} \to \mathbb{C}$,
	$s_u: \Gamma_{u} \to \mathbb{C}$,
	$s_{\mathrm{imp}}: \Gamma_{\mathrm{imp}} \to \mathbb{C}$ and
	$\bm{s}_-: \Gamma_- \to \mathbb{C}^{2}$.
\end{problem}

The first two equations in this problem are the linearised conservation of mass and momentum, respectively.
The first three boundary conditions prescribe the pressure, the normal velocity and the impedance.
The last boundary condition is only prescribed on the inflow region $\Gamma_-$ and specifies the tangential velocity.
The use of this last boundary condition will be discussed in the next section.

Furthermore, it is important to note that this propagation model supports both sound and vorticity waves, which are markedly different in nature, see Section~5.1 in \cite{tam2012computational}.
Sound waves induce only a curl-free velocity field while vorticity waves induce only a divergence-free velocity field.
In addition, the presence of the mean flow changes the phase velocity of the sound waves depending on their direction of propagation (one is therefore dealing with an anisotropic medium).

%%%%%%%%%%%%%%%%%%%%%%%%%%%%%%%%%%%%%%%%%%%%%%%%%%%%%%%%%%%%%%%%%%%%%%%%%%%%%%%%%%%%%%%%

\subsection{Matrix form of the problem and transmission variables}

%%%%%%%%%%%%%%%%%%%%%%%%%%%%%%%%%%%%%%%%%%%%%%%%%%%%%%%%%%%%%%%%%%%%%%%%%%%%%%%%%%%%%%%%

\subsubsection*{Governing equations and conservation of energy}

Problem \ref{prb:aero} can be easily expressed in the form of Problem \ref{sym_sys}.
The \emph{unknown vector}, the \emph{coefficient matrices} and the \emph{volume source vector} are given by
\begin{align}
	\bm{U} =
	\begin{bmatrix} p \\ \rho_0c_0\bm{u} \end{bmatrix},
	\quad
	\bm{A}_{j} =
	\begin{bmatrix}
		\bm{e}_j\cdot\bm{u}_0 & c_0 \bm{e}^\dagger_j            \\
		c_0 \bm{e}_j          & (\bm{e}_j\cdot\bm{u}_0)\bm{I}_3 \\
	\end{bmatrix}
	\quad \text{ and } \quad
	\bm{S}_\mathrm{vol} =
	\begin{bmatrix} s_\mathrm{vol} \\ \mathbf{0}_3 \end{bmatrix},
\end{align}
with the Cartesian direction vectors $\{\bm{e}_j\}_{j=1\dots 3}\in\mathbb{R}^3$.
The scaling of the velocity $\bm{u}$ by $\rho_0c_0$ is introduced to obtain symmetric matrices $\bm{A}_j$.
On a boundary with unit outward normal $\bm{n}$, the \emph{flux matrix} and the \emph{physical fluxes} read
\begin{align}
	\label{eqn:aero:fluxMatrix}
	\bm{F} =
	\begin{bmatrix}
		\bm{u}_0\cdot\bm{n} & c_0\bm{n}^\dagger           \\
		c_0\bm{n}           & (\bm{u}_0\cdot\bm{n})\bm{I}_3
	\end{bmatrix}
	\qquad\text{and}\qquad
	\bm{F}\bm{U} =
	\begin{bmatrix}
		(\bm{u}_0\cdot\bm{n}) p + \rho_0c_0^2 (\bm{u}\cdot\bm{n}) \\
		c_0 \bm{n} \: p + \rho_0c_0 (\bm{u}_0\cdot\bm{n}) \bm{u}
	\end{bmatrix}.
\end{align}
The boundary conditions prescribed at the boundary of the domain are written in matrix form below, after the definition of the transmission variables.

As discussed in Section~\ref{sec:general_time_harmonic_problem}, the energy conservation equation \eqref{eqn:consEnergy} also applies to time-harmonic solutions.
The time-averaged values of the \emph{energy density} $e$ and the \emph{energy flux} $f$ are defined as
\begin{equation}
	\label{eq:aero:energy}
	e
	:= \frac{1}{4\rho_0c_0^2} \bm{U}^\dagger\bm{U}
	= \frac{|p|^2}{4\rho_0 c_0^2}
	+ \frac{1}{4} \rho_0 |\bm{u}|^2
	\quad\text{and}\quad
	f
	:= \frac{1}{4\rho_0c_0^2} \bm{U}^\dagger\bm{F}\bm{U}
	= e (\bm{u}_0\cdot\bm{n}) + \tfrac{1}{2} \Re\{p\bm{u}^\dagger\bm{n}\},
\end{equation}
respectively.
We have divided by $\rho_0c_0^2$ to obtain units consistent with an energy density and a flux.
Note that the energy defined above is the perturbation energy, and not the acoustic energy.
It contains the energy carried by both the sound waves and the vorticity waves.
The well-known definitions of the energy density and flux for sound waves alone can be found in Section 1.7 in \cite{goldstein76}, but they differ from \eqref{eq:aero:energy}.
The definition of perturbation energy is discussed in detail in \cite{myersExactEnergyCorollary1986,myers91}.
It is possible to show that Equation \eqref{eq:aero:energy} is consistent with Equation (20) in \cite{myersExactEnergyCorollary1986} when assuming a uniform mean flow.

%%%%%%%%%%%%%%%%%%%%%%%%%%%%%%%%%%%%%%%%%%%%%%%%%%%%%%%%%%%%%%%%%%%%%%%%%%%%%%%%%%%%%%%%

\subsubsection*{Characteristic analysis and transmission variables}

To perform the characteristic analysis, we consider the flux matrix \eqref{eqn:aero:fluxMatrix} for a  given unit vector $\bm{n}$.
Its eigenvalues are
\begin{align}
	\lambda_1 = \bm{u}_0\cdot\bm{n}+c_0>0,
	\quad
	\lambda_2 = \bm{u}_0\cdot\bm{n}-c_0<0
	\quad\text{and}\quad
	\lambda_3=\lambda_4= \bm{u}_0\cdot\bm{n},
\end{align}
and a basis of associated unit eigenvectors is given by
\begin{align}
	\bm{w}_1 = \frac{1}{\sqrt{2}}\begin{bmatrix} 1 \\ \bm{n}\end{bmatrix},
	\quad
	\bm{w}_2 = \frac{1}{\sqrt{2}}\begin{bmatrix} 1 \\ -\bm{n}\end{bmatrix},
	\quad
	\bm{w}_{3} = \begin{bmatrix} 0 \\ \bm{\tau}_1\end{bmatrix}
	\quad\text{and}\quad
	\bm{w}_4 = \begin{bmatrix} 0 \\ \bm{\tau}_2\end{bmatrix}.
\end{align}
Since the mean flow is subsonic, we have $|\bm{u}_0\cdot\bm{n}|<c_0$.
The first two eigenvalues are then always strictly positive and strictly negative, respectively.
They represent sound waves propagating in the direction $\bm{n}$ and in the opposite direction, respectively.
The sign of the two last eigenvalues $\bm{u}_0\cdot\bm{n}$ depends solely on the direction of the mean flow relative to $\bm{n}$.
These two eigenvalues represent vorticity waves convected by the mean flow in the direction $\bm{n}$ if $\bm{u}_0\cdot\bm{n}>0$ and in the opposite direction if $\bm{u}_0\cdot\bm{n}<0$.
As mentioned above, these set of characteristics therefore shows that the linearised Euler equations support both sound and vorticity waves.

Due to the associated eigenvectors, the first two eigenvalues correspond to transmission variables expressed as $p$ and $\bm{u}\cdot\bm{n}$, while the remaining two correspond to transmission variables expressed as $\bm{T}^\dagger\bm{u}$.
The transmission variables are therefore referred to as normal and tangential, respectively.
Using definition \eqref{eqn:transVar}, they can be written as
\begin{align}
	\bm{G}^{+}
	=
	\begin{cases}
		\begin{bmatrix}
			g^{+,\mathrm{n}} \\
			\bm{g}^{+,\mathrm{t}}
		\end{bmatrix}
		& \text{if $\bm{u}_0\cdot\bm{n}>0$,}
		\vspace{1mm}
		\\
		\begin{bmatrix}
			g^{+,\mathrm{n}}
		\end{bmatrix}
		& \text{otherwise,}
	\end{cases}
	\qquad\text{and}\qquad
	\bm{G}^{-}
	=
	\begin{cases}
		\begin{bmatrix}
			g^{-,\mathrm{n}} \\
			\bm{g}^{-,\mathrm{t}}
		\end{bmatrix}
		& \text{if $\bm{u}_0\cdot\bm{n}<0$,}
		\vspace{1mm}
		\\
		\begin{bmatrix}
			g^{-,\mathrm{n}}
		\end{bmatrix}
		& \text{otherwise,}
	\end{cases}
\end{align}
with the normal (superscript $^\mathrm{n}$) and tangential (superscript $^\mathrm{t}$) variables defined as
\begin{align}
	g^{+,\mathrm{n}}
	& := \sqrt{\lambda_1}\bm{w}_1^\dagger\bm{U}
	= \sqrt{c_0^+/2} \ (p+\rho_0c_0\bm{u}\cdot\bm{n}),
	\label{eqn:aero:gPlusN}
	\\
	g^{-,\mathrm{n}}
	& := \sqrt{-\lambda_2}\bm{w}_2^\dagger\bm{U}
	= \sqrt{c_0^-/2} \ (p-\rho_0c_0\bm{u}\cdot\bm{n}),
	\\
	\bm{g}^{+,\mathrm{t}}
	& := \sqrt{\lambda_{3,4}}\begin{bmatrix}\bm{w}_3 & \bm{w}_4\end{bmatrix}^\dagger\bm{U}
	= \rho_0c_0\sqrt{u_0^\mathrm{n}} \ \bm{T}^\dagger\bm{u},
	&
	& \text{(if $u_0^\mathrm{n}>0$)}
	\label{eqn:aero:gPlusT}
	\\
	\bm{g}^{-,\mathrm{t}}
	& := \sqrt{-\lambda_{3,4}}\begin{bmatrix}\bm{w}_3 & \bm{w}_4\end{bmatrix}^\dagger\bm{U}
	= \rho_0c_0\sqrt{-u_0^\mathrm{n}} \ \bm{T}^\dagger\bm{u}.
	&
	& \text{(if $u_0^\mathrm{n}<0$)}
\end{align}
with the short-hand notations $u_0^\mathrm{n} := \bm{u}_0\cdot\bm{n}$ and $c_{0}^{\pm} := c_0 \pm u_0^\mathrm{n}$.

%%%%%%%%%%%%%%%%%%%%%%%%%%%%%%%%%%%%%%%%%%%%%%%%%%%%%%%%%%%%%%%%%%%%%%%%%%%%%%%%%%%%%%%%

\subsubsection*{Boundary conditions and passivity condition}

On the boundary $\partial\Omega$ of the domain, the different boundary conditions introduced in Problem \ref{prb:aero} can be written in terms of the transmission variables as follows:
\begin{align}
	g^{+,\mathrm{n}} \big/ \sqrt{c_0^+}
	+ g^{-,\mathrm{n}} \big/ \sqrt{c_0^-}
	& = \sqrt{2} s_{p}
	&
	& \text{on } \Gamma_{p},
	\\
	g^{+,\mathrm{n}} \big/ \sqrt{c_0^+}
	- g^{-,\mathrm{n}} \big/ \sqrt{c_0^-}
	& = \sqrt{2}\rho_0 c_0 s_u
	&
	& \text{on } \Gamma_{u},
	\\
	g^{-,\mathrm{n}}
	& = \sqrt{c_0^-/2} \ s_{\mathrm{imp}}
	&
	& \text{on } \Gamma_{\mathrm{imp}},
	\\
	\bm{g}^{-,\mathrm{t}}
	& = \rho_0c_0\sqrt{-u_0^\mathrm{n}} \ \bm{s}_-
	&
	& \text{on } \Gamma_-. \qquad \text{(if $u_0^\mathrm{n}<0$)}
\end{align}
For each region of the boundary, these conditions can be written in matrix form $\bm{G}^{-} - \bm{\alpha} \bm{G}^{+} = \bm{S}_\mathrm{sur}$ with the boundary matrix $\bm{\alpha}$ defined as
\begin{align}
	\bm{\alpha}
	=
	\begin{cases}
		\begin{bmatrix} \alpha & \bm{0}^\dagger \end{bmatrix} & \text{if $u_0^\mathrm{n}>0$,} \vspace{1mm} \\
		\begin{bmatrix} \alpha \end{bmatrix}                  & \text{if $u_0^\mathrm{n}=0$,} \vspace{1mm} \\
		\begin{bmatrix} \alpha \\ \bm{0} \end{bmatrix}        & \text{if $u_0^\mathrm{n}<0$,}
	\end{cases}
	\qquad\text{with }
	\alpha
	:= \begin{cases}
		-\sqrt{c_0^-/c_0^+} & \text{on } \Gamma_{p},            \\
		\sqrt{c_0^-/c_0^+}  & \text{on } \Gamma_{u},            \\
		0                                                           & \text{on } \Gamma_{\mathrm{imp}}, \\
	\end{cases}
\end{align}
and the surface source vector $\bm{S}_\mathrm{sur}$ defined as
\begin{align}
	\bm{S}_\mathrm{sur}
	=
	\begin{cases}
		\begin{bmatrix} g_\mathrm{sur}^{\mathrm{n}} \end{bmatrix}                                                  & \text{if $u_0^\mathrm{n}\geq0$,} \vspace{1mm} \\
		\begin{bmatrix} g_\mathrm{sur}^{\mathrm{n}} \\ \bm{g}_\mathrm{sur}^{\mathrm{t}} \end{bmatrix} & \text{if $u_0^\mathrm{n}<0$,}
	\end{cases}
	\qquad\text{with }
	g_\mathrm{sur}^{\mathrm{n}}
	:= \begin{cases}
		\sqrt{2c_0^+} \: s_p             & \text{on } \Gamma_{p},            \\
		-\sqrt{2c_0^+} \: \rho_0c_0s_u   & \text{on } \Gamma_{u},            \\
		\sqrt{c_0^-/2} \: s_\mathrm{imp} & \text{on } \Gamma_{\mathrm{imp}},
	\end{cases}
\end{align}
and $\bm{g}_\mathrm{sur}^{\mathrm{t}} := \rho_0c_0\sqrt{-u_0^\mathrm{n}} \ \bm{s}_-$.

The passivity condition in Equation \eqref{eqn:BChyp} holds if and only if $|\alpha| \leq 1$.
It is always the case on $\Gamma_\mathrm{imp}$, but it is the case on $\Gamma_p$ and $\Gamma_{u}$ if and only if $u_0^\mathrm{n} \geq 0$.
Therefore, the passivity condition requires the following hypothesis.

\begin{assumption}[Passivity condition]
	\label{hyp:aero:passivity}
	The background flow is grazing or outgoing, i.e.~$u_0^\mathrm{n}\ge0$, on $\Gamma_{p}\cup\Gamma_{u}$, or equivalently, $\Gamma_{p}\cup\Gamma_{u}\subset\Gamma_+$.
\end{assumption}

%%%%%%%%%%%%%%%%%%%%%%%%%%%%%%%%%%%%%%%%%%%%%%%%%%%%%%%%%%%%%%%%%%%%%%%%%%%%%%%%%%%%%%%%

\subsection{Discontinuous Galerkin method with upwind fluxes}

The DG formulation is given by Problem \ref{pbm:DG_general} with $\bm{\mathcal{V}}_h^l:=\bm{\mathcal{V}}_h^1\times\bm{\mathcal{V}}_h^3$.
For the sake of brevity, in the following, we denote $\mathcal{V}_h=\bm{\mathcal{V}}_h^1$ and $\bm{\mathcal{V}}_h=\bm{\mathcal{V}}_h^3$.
For the sake of simplicity, we also set $s_\mathrm{vol}=0$.

It can be written in terms of the physical unknowns as follows
\begin{problem}
	Find $(p_h,\bm{u}_h)\in \mathcal{V}_h\times\bm{\mathcal{V}}_h$ such that, for all $(q_h,\bm{v}_h)\in \mathcal{V}_h\times\bm{\mathcal{V}}_h$,
	\begin{align}
		\left\{
		\begin{aligned}
			& - \i\omega\big(p_h, q_h\big)_{\mathcal{T}_h}
			- \big(p_h,\bm{u}_0\cdot\grad q_h\big)_{\mathcal{T}_h}
			- \big(\rho_0c_0^2\bm{u}_h, \grad q_h\big)_{\mathcal{T}_h}
			\\
			& \qquad\qquad\qquad\qquad\qquad\qquad\qquad\qquad
			+ \big\langle(\bm{u}_0\cdot\bm{n}) \widehat{p}_h + \rho_0c_0^2(\bm{n}\cdot\widehat{\bm{u}}_h), q_h\big\rangle_{\partial\mathcal{T}_h}
			= 0,
			\\
			& - \i\omega\big(\rho_0c_0\bm{u}_h, \bm{v}_h\big)_{\mathcal{T}_h}
			- \big(\rho_0c_0\bm{u}_h,\grad\bm{v}_h\cdot\bm{u}_0\big)_{\mathcal{T}_h}
			- \big(c_0p_h, \grad\cdot\bm{v}_h\big)_{\mathcal{T}_h}
			\\
			& \qquad\qquad\qquad\qquad\qquad\qquad\qquad\qquad
			+ \big\langle c_0\widehat{p}_h\bm{n}+\rho_0c_0(\bm{u}_0\cdot\bm{n})\widehat{\bm{u}}_h,\bm{v}_h\big\rangle_{\partial\mathcal{T}_h}
			= 0,
		\end{aligned}
		\right.
	\end{align}
	with the numerical fluxes defined below.
\end{problem}

For each face $F$ of each element $K$, the numerical fluxes can be written in terms of the transmission variables by using equation \eqref{eqn:IFace:numFlux} with the matrices resulting from the flux splitting
\begin{align}
	\bm{\varLambda}^+_{K,F}
	& =
	\begin{cases}
		\begin{bmatrix}
			c_0^+ & 0                       \\
			0     & u_0^\mathrm{n} \bm{I}_2
		\end{bmatrix}
		& \text{if } u_0^\mathrm{n}>0,
		\vspace{1mm}
		\\
		\begin{bmatrix}
			c_0^+
		\end{bmatrix}
		& \text{otherwise},
	\end{cases}
	&
	\bm{W}^+_{K,F}
	& =
	\begin{cases}
		\begin{bmatrix}
			\frac{1}{\sqrt{2}}             & \bm{0}_2^\dagger \\
			\frac{1}{\sqrt{2}}\bm{n}_{K,F} & \bm{T}_{K,F}
		\end{bmatrix}
		& \text{if } u_0^\mathrm{n}>0,
		\vspace{1mm}
		\\
		\begin{bmatrix}
			\frac{1}{\sqrt{2}} \\
			\frac{1}{\sqrt{2}}\bm{n}_{K,F}
		\end{bmatrix}
		& \text{otherwise},
	\end{cases}
	\\
	\bm{\varLambda}^-_{K,F}
	& =
	\begin{cases}
		\begin{bmatrix}
			-c_0^- & 0                       \\
			0      & u_0^\mathrm{n} \bm{I}_2
		\end{bmatrix}
		& \text{if } u_0^\mathrm{n}<0,
		\vspace{1mm}
		\\
		\begin{bmatrix}
			-c_0^-
		\end{bmatrix}
		& \text{otherwise},
	\end{cases}
	&
	\bm{W}^-_{K,F}
	& =
	\begin{cases}
		\begin{bmatrix}
			\frac{1}{\sqrt{2}}              & \bm{0}_2^\dagger \\
			-\frac{1}{\sqrt{2}}\bm{n}_{K,F} & \bm{T}_{K,F}
		\end{bmatrix}
		& \text{if } u_0^\mathrm{n}<0,
		\\
		\vspace{1mm}
		\begin{bmatrix}
			\frac{1}{\sqrt{2}} \\
			-\frac{1}{\sqrt{2}}\bm{n}_{K,F}
		\end{bmatrix}
		& \text{otherwise},
	\end{cases}
\end{align}
where the subscripts $_{K,F}$ are avoided for brevity.
The \emph{numerical fluxes} then read
\begin{align}
	u_0^\mathrm{n}\widehat{p}_F + \rho_0c_0^2 (\bm{n}_{K,F}\cdot\widehat{\bm{u}}_F)
	& = \frac{1}{\sqrt{2}}\left(\sqrt{c_0^+} g^{+,\mathrm{n}}_{K,F}-\sqrt{c_0^-} g^{-,\mathrm{n}}_{K,F}\right),
	\\
	c_0\widehat{p}_F + \rho_0c_0u_0^\mathrm{n} (\bm{n}_{K,F}\cdot\widehat{\bm{u}}_F)
	& = \frac{1}{\sqrt{2}}\left(\sqrt{c_0^+} g^{+,\mathrm{n}}_{K,F}+\sqrt{c_0^-} g^{-,\mathrm{n}}_{K,F}\right),
	\\
	\rho_0c_0u_0^\mathrm{n} (\bm{\pi}_{K,F}\widehat{\bm{u}}_{F})
	& =
	\begin{cases}
		\sqrt{|u_0^\mathrm{n}|} \: \bm{T}_{K,F}\bm{g}^{+,\mathrm{t}}_{K,F} & \text{if } u_0^\mathrm{n}>0, \\
		\mathbf{0}_3                                                       & \text{if } u_0^\mathrm{n}=0, \\
		\sqrt{|u_0^\mathrm{n}|} \: \bm{T}_{K,F}\bm{g}^{-,\mathrm{t}}_{K,F} & \text{if } u_0^\mathrm{n}<0.
	\end{cases}
\end{align}
with the \emph{transmission variables} defined as
\begin{align}
	g^{+,\mathrm{n}}_{K,F}
	& := \sqrt{c_0^+/2} \: (p_K + \rho_0c_0\bm{n}_{K,F}\cdot\bm{u}_K),
	\\
	g^{-,\mathrm{n}}_{K,F}
	& :=
	\begin{cases}
		g^{+,\mathrm{n}}_{K',F}
		& \text{if } F\not\subset\partial\Omega,
		\\
		- \sqrt{c_0^-/c_0^+} \: g^{+,\mathrm{n}}_{K,F} + \sqrt{2c_0^-} \: s_{p}
		& \text{if } F\subset\Gamma_{p},
		\\
		\sqrt{c_0^-/c_0^+} \: g^{+,\mathrm{n}}_{K,F} - \sqrt{2c_0^-} \: \rho_0 c_0 \: s_u
		& \text{if } F\subset\Gamma_{u},
		\\
		\sqrt{c_0^-/2} \: s_{\mathrm{imp}}
		& \text{if } F\subset\Gamma_{\mathrm{imp}},
	\end{cases}
	\\
	\bm{g}^{+,\mathrm{t}}_{K,F}
	& := \rho_0c_0\sqrt{|u_0^\mathrm{n}|} \: \bm{T}^\dagger_{K,F}\bm{u}_K,
	&
	& \text{(if $u_0^\mathrm{n}>0$)}
	\\
	\bm{g}^{-,\mathrm{t}}_{K,F}
	& :=
	\begin{cases}
		\bm{g}^{+,\mathrm{t}}_{K',F}
		& \text{if } F\not\subset\Gamma_-,
		\\
		\rho_0c_0\sqrt{|u_0^\mathrm{n}|} \: \bm{s}_-
		& \text{if } F\subset\Gamma_-.
	\end{cases}
	&
	& \text{(if $u_0^\mathrm{n}<0$)}
\end{align}
By manipulating these relations, we can write the quantities $\widehat{p}_h$ and $\widehat{\bm{u}}_h$, which are called the \emph{numerical traces}, in terms of the physical quantities as
\begin{align}
	\widehat{p}_F
	& =
	\begin{cases}
		\frac{1}{2}(p_K+p_{K'}) + \frac{1}{2}\rho_0c_0 (\bm{n}_{K,F}\cdot(\bm{u}_K-\bm{u}_{K'}))
		& \text{if $F\not\subset\partial\Omega$},
		\\
		s_{p}
		& \text{if $F\subset\Gamma_{p}$,}
		\\
		p_K+\rho_0c_0 (\bm{n}_{K,F}\cdot\bm{u}_K-s_u)
		& \text{if $F\subset\Gamma_{u}$,}
		\\
		\frac{1}{2}( p_K + \rho_0c_0(\bm{n}_{K,F}\cdot\bm{u}_K) + s_{\mathrm{imp}} )
		& \text{if $F\subset\Gamma_{\mathrm{imp}}$,}
	\end{cases}
	\\
	\bm{n}_{K,F}\cdot\widehat{\bm{u}}_F
	& =
	\begin{cases}
		\frac{1}{2}\bm{n}_{K,F}\cdot(\bm{u}_K+\bm{u}_{K'}) + \frac{1}{2\rho_0c_0}(p_K-p_{K'})
		& \text{if $F\not\subset\partial\Omega$},
		\\
		(\bm{n}_{K,F}\cdot\bm{u}_K) + \frac{1}{\rho_0c_0}\left(p_K - s_{p}\right)
		& \text{if $F\subset\Gamma_{p}$,}
		\\
		s_u
		& \text{if $F\subset\Gamma_{u}$,}
		\\
		\frac{1}{2\rho_0c_0}\left(p_K+\rho_0c_0\bm{n}_{K,F}\cdot\bm{u}_K-s_{\mathrm{imp}}\right)
		& \text{if $F\subset\Gamma_{\mathrm{imp}}$,}
	\end{cases}
	\\
	\bm{\pi}_{K,F}\widehat{\bm{u}}_F
	& =
	\begin{cases}
		\bm{\pi}_{K,F}\bm{u}_K
		& \text{if $u_0^\mathrm{n}>0$,}
		\\
		\bm{\pi}_{K,F}\bm{u}_{K'}
		& \text{if $u_0^\mathrm{n}<0$ and $F\not\subset\Gamma_-$,}
		\\
		\bm{Ts}_-
		& \text{if $u_0^\mathrm{n}<0$ and $F\subset\Gamma_-$.}
	\end{cases}
\end{align}
Note that the tangential numerical trace $\bm{\pi}_{K,F}\widehat{\bm{u}}_F$ does not need to be defined if $u_0^\mathrm{n}=0$.

%%%%%%%%%%%%%%%%%%%%%%%%%%%%%%%%%%%%%%%%%%%%%%%%%%%%%%%%%%%%%%%%%%%%%%%%%%%%%%%%%%%%%%%%

\subsection{Hybridization with transmission variables}

To define the hybridized formulation, we introduce the functional spaces associated to the global outgoing and incoming transmission variables, respectively, as
\begin{align}
	\bm{\mathcal{G}}_h^+
	& := \textstyle \Big[\prod_{K\in\mathcal{T}_h}\prod_{F\in\mathcal{F}_K}\mathcal{P}_\mathrm{p}(F)\Big]
	\times \Big[\prod_{K\in\mathcal{T}_h}\prod_{F\in\mathcal{F}_K^+}\mathcal{P}^{2}_\mathrm{p}(F)\Big],
	\\
	\bm{\mathcal{G}}_h^-
	& := \textstyle \Big[\prod_{K\in\mathcal{T}_h}\prod_{F\in\mathcal{F}_K}\mathcal{P}_\mathrm{p}(F)\Big]
	\times \Big[\prod_{K\in\mathcal{T}_h}\prod_{F\in\mathcal{F}_K^-}\mathcal{P}^{2}_\mathrm{p}(F)\Big],
\end{align}
where $\mathcal{F}_K^+=\{F\in\mathcal{F}_K, u_0^\mathrm{n}>0\}$ and $\mathcal{F}_K^-=\{F\in\mathcal{F}_K, u_0^\mathrm{n}<0\}$ denote the collections of the faces of $K$ where the mean flow is outgoing and incoming, respectively.
The global outgoing and incoming variables are written in terms of their normal and tangential components as
\begin{align}
	\bm{G}_h^{+}
	= \begin{bmatrix}g_h^{+,\mathrm{n}} \\ \bm{g}_h^{+,\mathrm{t}}\end{bmatrix}
	\qquad\text{and}\qquad
	\bm{G}_h^{-}
	= \begin{bmatrix}g_h^{-,\mathrm{n}} \\ \bm{g}_h^{-,\mathrm{t}}\end{bmatrix},
\end{align}
respectively. The incoming variable $\bm{G}_h^{-} \in \bm{\mathcal{G}}_h^{-}$ is the unknown of the hybridized problem.
It is important to note that the number of transmission variables is not the same on each face, since it depends on the direction of the mean flow relative to the face normal.

The hybridized formulation, given by Problem \ref{pbm:red2_general}, can be explicitly written with the operators and the right-hand side vector defined as follows.

First, the \textit{exchange operator} $\Pi : \bm{\mathcal{G}}_h^{+}\rightarrow\bm{\mathcal{G}}_h^{-}$ is defined, for each face $F$ of each element $K$ and for all $\bm{G}^+_h \in \bm{\mathcal{G}}_h^{+}$, as
\begin{align}
	\Pi(\bm{G}^+_h)|_{K,F}
	=
	\begin{cases}
		\begin{bmatrix}
			\Pi_n(g_h^{+,\mathrm{n}})|_{K,F} \\
			\Pi_t(\bm{g}_h^{+,\mathrm{t}})|_{K,F}
		\end{bmatrix}
		& \text{if $u_0^\mathrm{n}<0$,}
		\\
		\Pi_n(g_h^{+,\mathrm{n}})|_{K,F}
		& \text{otherwise,}
	\end{cases}
\end{align}
with
\begin{align}
	\Pi_n(g_h^{+,\mathrm{n}})|_{K,F}      & =
	\begin{cases}
		g^{+,\mathrm{n}}_{K',F}
		& \text{if } F\not\subset\partial\Omega,    \\
		-g_{K,F}^{+,\mathrm{n}}\sqrt{c_0^-/c_0^+}
		& \text{if } F\subset\Gamma_{p},            \\
		g_{K,F}^{+,\mathrm{n}}\sqrt{c_0^-/c_0^+}
		& \text{if } F\subset\Gamma_{u},            \\
		0
		& \text{if } F\subset\Gamma_{\mathrm{imp}},
	\end{cases} \\
	\Pi_t(\bm{g}_h^{+,\mathrm{t}})|_{K,F} & =
	\begin{cases}
		\bm{g}^{+,\mathrm{t}}_{K',F}
		& \text{if } F\not\subset\Gamma_-, \\
		\mathbf{0}_2
		& \text{if } F\subset\Gamma_-.
	\end{cases}
	\qquad \text{(if $u_0^\mathrm{n}<0$)}
\end{align}

Then, the \textit{scattering operator} $\TS:\bm{\mathcal{G}}_h^{-}\rightarrow\bm{\mathcal{G}}_h^{+}$ is defined, for each face $F$ of each element $K$ and for all $\bm{G}^-_h \in \bm{\mathcal{G}}_h^{-}$, as
\begin{align}
	\TS(\bm{G}^{-}_h)|_{K,F}
	=
	\begin{cases}
		\begin{bmatrix}
			g^{+,\mathrm{n}}_{K,F} \\
			\bm{g}^{+,\mathrm{t}}_{K,F}
		\end{bmatrix}
		& \text{if $u_0^\mathrm{n}>0$,}
		\\
		g^{+,\mathrm{n}}_{K,F}
		& \text{otherwise,}
	\end{cases}
\end{align}
where $g^{+,\mathrm{n}}_{K,F}$ and $\bm{g}^{+,\mathrm{t}}_{K,F}$ are the outgoing transmission variables computed with the solution of the local problem corresponding to $K$, with the incoming transmission variable $\{\bm{G}^{-}_{K,F}\}_{F\in\mathcal{F}_K}$ contained in $\bm{G}^{-}_h$ as a given surface datum.
The local problem reads as follows.
\begin{problem}
	Find $(p_K,\bm{u}_K)\in\mathcal{P}_\mathrm{p}(K)\times\mathcal{P}^{2}_\mathrm{p}(K)$ such that, for all $(q_K,\bm{v}_K)\in\mathcal{P}_\mathrm{p}(K)\times\mathcal{P}^{2}_\mathrm{p}(K)$,
	\begin{align}
		\left\{
		\begin{aligned}
			& - \i\omega\big(p_K, q_K\big)_K
			- \big(p_K,\bm{u}_0\cdot\grad q_K\big)_K
			- \big(\rho_0c_0^2\bm{u}_K, \grad q_K\big)_K
			\\
			& \qquad\qquad
			+ \sum_F\Big\langle \frac{1}{\sqrt{2}}\sqrt{c_0^+} g^{+,\mathrm{n}}_{K,F}, q_K\Big\rangle_F
			= \sum_F\Big\langle \frac{1}{\sqrt{2}}\sqrt{c_0^-} g^{-,\mathrm{n}}_{K,F}, q_K\Big\rangle_F,
			\\
			& - \i\omega\big(\rho_0c_0\bm{u}_K, \bm{v}_K\big)_K
			- \big(\rho_0c_0\bm{u}_K,\grad\bm{v}_K\cdot\bm{u}_0\big)_K
			- \big(c_0p_K, \grad\cdot\bm{v}_K\big)_K
			\\
			& \qquad\qquad
			+ \sum_F\Big\langle \frac{1}{\sqrt{2}}\sqrt{c_0^+} g^{+,\mathrm{n}}_{K,F}, \bm{v}_K\Big\rangle_F
			+ \sum_{F,u_0^\mathrm{n}>0}\Big\langle \sqrt{|u_0^\mathrm{n}|} \: \bm{T}_{K,F}\bm{g}^{+,\mathrm{t}}_{K,F} , \bm{v}_K \Big\rangle_F
			\\
			& \qquad\qquad
			=
			- \sum_F\Big\langle \frac{1}{\sqrt{2}}\sqrt{c_0^-} g^{-,\mathrm{n}}_{K,F}, \bm{v}_K\Big\rangle_F
			- \sum_{F,u_0^\mathrm{n}<0}\Big\langle \sqrt{|u_0^\mathrm{n}|} \: \bm{T}_{K,F}\bm{g}^{-,\mathrm{t}}_{K,F} , \bm{v}_K \Big\rangle_F,
		\end{aligned}
		\right.
	\end{align}
	where $g^{+,\mathrm{n}}_{K,F}$ and $\bm{g}^{+,\mathrm{t}}_{K,F}$ are defined with the local fields $p_K$ and $\bm{u}_K$ using equations \eqref{eqn:aero:gPlusN} and \eqref{eqn:aero:gPlusT}.
\end{problem}

Finally, the \textit{right-hand side vector} $\bm{b}$ is defined, for each face $F$ of each element $K$, as
\begin{align}
	\bm{b}_{K,F}
	=
	\begin{cases}
		\begin{bmatrix}
			b^{n}_{K,F} \\
			\bm{b}^t_{K,F}
		\end{bmatrix}
		& \text{if $u_0^\mathrm{n}<0$,}
		\\
		b^{n}_{K,F}
		& \text{otherwise,}
	\end{cases}
\end{align}
with
\begin{align}
	b^{n}_{K,F}
	& =
	\begin{cases}
		0
		& \text{if }F\not\subset\partial\Omega,
		\\
		s_{p}\sqrt{2c_0^-}
		& \text{if } F\subset\Gamma_{p},
		\\
		-s_u \rho_0c_0\sqrt{2c_0^-}
		& \text{if } F\subset\Gamma_{u},
		\\
		s_{\mathrm{imp}}\sqrt{c_0^-/2}
		& \text{if } F\subset\Gamma_{\mathrm{imp}},
	\end{cases} \\
	\bm{b}^{t}_{K,F}
	& =
	\begin{cases}
		\mathbf{0}_2
		& \text{if } F\not\subset\Gamma_-,
		\\
		-\rho_0c_0\sqrt{|u_0^\mathrm{n}|}\bm{s}_-
		& \text{if } F\subset\Gamma_-.
	\end{cases}
	\qquad \text{(if $u_0^\mathrm{n}<0$)}
\end{align}

%%%%%%%%%%%%%%%%%%%%%%%%%%%%%%%%%%%%%%%%%%%%%%%%%%

\section{Numerical results}
\label{sec:numStudy}

In this section, we present two-dimensional numerical results to illustrate the method and to compare iterative procedures for solving the CHDG hybridized system.
Specifically, we consider the propagation of plane sound waves in a square domain (Section~\ref{sec:num1}), sound waves generated by a point source in a circular domain (Section~\ref{sec:num2}), and vorticity waves in a rigid waveguide (Section~\ref{sec:num3}) in the presence of a uniform mean flow.
These benchmarks allow us to test different types of waves and boundary conditions, and to study the influence of physical parameters.
The simulations are performed with dedicated \textsf{MATLAB} scripts.

%%%%%%%%%%%%%%%%%%%%%%%%%%%%%%%%%%%%%%%%%%%%%%%%%%%%%%%%%%%%%%%%%%%%%%%%%%%%%%%%%%%%%%%%

\subsection{Numerical setting}

The discretization is based on unstructured, conforming meshes of the domains composed of straight triangular elements.
Hierarchical shape functions based on Lobatto functions are used to represent the physical fields on the triangles and the transmission variables on the edges, see e.g.~\cite{solin2003, beriot2016}.
In all the cases, the polynomial degree is $\mathrm{p}=3$.
The meshes are constructed such that the relative numerical error is always close to $1\,\%$.
They are generated using \textsf{Gmsh} \cite{geuzaine2009} with a prescribed element size $h$.

Three iterative schemes are considered.
For the CHDG system, we use the fixed-point iteration, which converges since $\Pi\TS$ is a strict contraction.
Each iteration requires minimal computation and memory storage.
However, the number of iterations can vary greatly depending on the type of problem.
We also consider the CGNR \textit{(conjugate gradient normal residual)} method, which applies the conjugate gradient method to the normal system $\matalg{A}^\dagger\matalg{A}\vecalg{x}=\matalg{A}^\dagger\vecalg{b}$ for a given system $\matalg{A}\vecalg{x}=\vecalg{b}$.
The cost per iteration is higher because both the system matrix and its adjoint must be applied.
Finally, we consider the GMRES \textit{(generalized minimal residual)} iteration without restart.
Only one application of the matrix is required but the computational cost and the memory storage increase with each iteration.
In practice, a restart strategy should be employed to reduce the cost per iteration.
However, the objective here is to compare the GMRES method based on the number of iterations in the best-case scenario.

As in \cite{modave2023}, we use a symmetric preconditioning of the hybridized system to improve the convergence of the iterative solvers.
This is based on the matrix $\matalg{M}$ containing on its diagonal the local mass matrices associated with the faces of the elements.
Denoting the Cholesky factorization of this matrix as $\matalg{M} = \matalg{L}\matalg{L}^\top$, the precondcitioned system is $\tilde{\matalg{A}}\tilde{\vecalg{g}}=\tilde{\vecalg{b}}$ with $\tilde{\matalg{A}}:=\matalg{L}^{-1}\matalg{A}\matalg{L}^{-\top}$, $\tilde{\vecalg{g}}:=\matalg{L}^{\top}\vecalg{g}$ and $\tilde{\vecalg{b}}:=\matalg{L}^{-1}\vecalg{b}$.
This system corresponds to the one that would be obtained if orthonormal basis functions were used on the skeleton, see \cite{modave2023}.
It follows that the $2$-norm of an algebraic vector $\tilde{\vecalg{g}}$ is equal to the $L^2$-norm of the corresponding field, i.e.~$\|\tilde{\vecalg{g}}\|_2^2 = \tilde{\vecalg{g}}^\dagger\tilde{\vecalg{g}} = \vecalg{g}^\dagger\matalg{M}\vecalg{g} = \|\bm{g}_h\|_{\partial\mathcal{T}_h}^2$.

The different solution procedures are compared by tracking the evolution of the relative error over the course of the iterations.
The relative error is defined as
\begin{align}
	\text{relative error} := \frac{\|(p_h-p_\mathrm{ref},\bm{u}_h-\bm{u}_\mathrm{ref})\|_\mathcal{E}}{\|(p_\mathrm{ref},\bm{u}_\mathrm{ref})\|_\mathcal{E}}\;,
\end{align}
with the numerical solution $(p_h,\bm{u}_h)$ and the reference analytic solution $(p_\mathrm{ref},\bm{u}_\mathrm{ref})$, where $\|\cdot\|_\mathcal{E}$ is the energy norm based on the energy density \eqref{eq:aero:energy} defined as
\begin{align}
	\|(p,\bm{u})\|^2_\mathcal{E}
	& := \sum_{K\in\mathcal{T}_h}
	\Big( \frac{1}{4\rho_0 c_0^2} \|p\|^2_{L^2(K)} + \frac{1}{4}\rho_0 \|\bm{u}\|^2_{L^2(K)}\Big).
\end{align}

%%%%%%%%%%%%%%%%%%%%%%%%%%%%%%%%%%%%%%%%%%%%%%%%%%%%%%%%%%%%%%%%%%%%%%%%%%%%%%%%%%%%%%%%

\subsection{Sound plane wave}
\label{sec:num1}

We consider the propagation of a plane wave in a uniform mean flow $\bm{u}_0 = u_0 \bm{\hat{u}}_0$, where $\bm{\hat{u}}_0$ is a unit vector, and no volume source term, i.e.~$s_\mathrm{vol}=0$.
The reference fields read
\begin{align}
	p_\mathrm{ref}(\bm{x}) = e^{\i\kappa\bm{\hat{\kappa}}\cdot\bm{x}}
	\qquad\text{and}\qquad
	\bm{u}_\mathrm{ref}(\bm{x}) & = \bm{\hat{\kappa}} e^{\i\kappa\bm{\hat{\kappa}}\cdot\bm{x}} / (\rho_0c_0)
\end{align}
with the wavenumber $\kappa := \omega/(c_0 + \bm{\hat{\kappa}}\cdot\bm{u}_0)$ and the propagation direction $\bm{\hat{\kappa}}$, which is a unit vector.
The simulations are performed using a square computational domain $\Omega = {]0,1[}^2$, a mean flow velocity with amplitude $u_0 = 0.25$ and direction $\bm{\hat{u}}_0 = (1,1)/\sqrt{2}$, and density $\rho_0=1$.
Two angular frequencies $\omega=15\pi$ and $\omega=25\pi$ are tested.

Several configurations are considered for the propagation direction and the boundary conditions.
Both the propagation in the same direction as the mean flow (i.e.~$\bm{\hat{\kappa}} = \bm{\hat{u}}_0$) and in the opposite direction (i.e.~$\bm{\hat{\kappa}} = -\bm{\hat{u}}_0$) are tested, as illustrated in Figures~\ref{fig:bench1:conf1} and \ref{fig:bench1:conf2}.
The sound speed is $c_0=1$ in the first configuration and $c_0=1.5$ in the second.
The solutions are then identical in both cases, even though the sets of equations differ.
A reference solution is shown on Figure~\ref{fig:bench1:solRef}.
In accordance with Assumption \ref{hyp:aero:passivity}, both the inflow boundary condition on $\bm{T}^\dagger\bm{u}$ and an impedance condition are prescribed on the region of the boundary where the mean flow is ingoing, i.e.~on $\Gamma_-$.
On the other region, i.e.~on $\Gamma_+$, a boundary condition on pressure or an impedance condition is prescribed.
In all cases, the boundary conditions are non-homogeneous, and the surface data are defined using the reference fields.

\begin{figure}[!tbh]
	\begin{subfigure}{0.30\textwidth}
		\centering
		\includegraphics{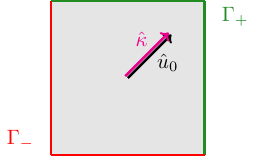}
		\centering
		\caption{Configuration 1}
		\label{fig:bench1:conf1}
	\end{subfigure}
	\hfill
	\begin{subfigure}{0.30\textwidth}
		\centering
		\includegraphics{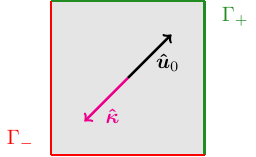}
		\centering
		\caption{Configuration 2}
		\label{fig:bench1:conf2}
	\end{subfigure}
	\hfill
	\begin{subfigure}{0.34\textwidth}
		\centering
		\includegraphics[width=27mm]{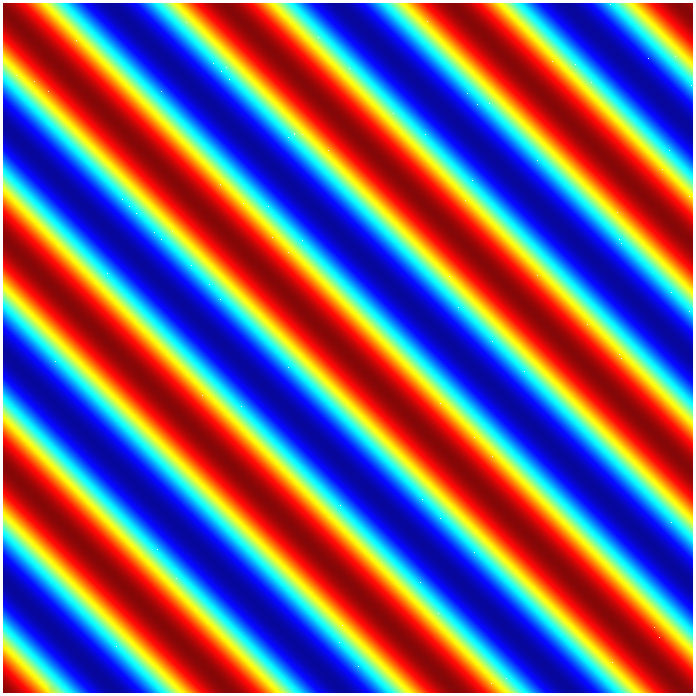}
		\includegraphics[width=5mm]{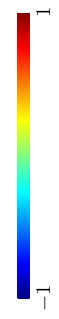}
		\caption{Real part of the reference field $p_\text{ref}$.}
		\label{fig:bench1:solRef}
	\end{subfigure}
	\caption{Plane wave benchmark. Considered configurations with directions of propagation of the plane wave $\bm{\hat{\kappa}}$ and of the background flow $\bm{\hat{u}}_0$ in two cases. Reference solution.}
	\label{fig:bench1:config}
\end{figure}

\begin{figure}[!tbhp]
	\centering
	\includegraphics{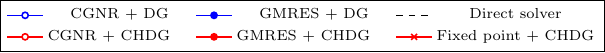}
	\\
	\medskip
	\begin{subfigure}[b]{0.49\textwidth}
		\centering
		\caption{$\bm{\hat{\kappa}}=\bm{\hat{u}}_0$ ; $\omega=15\pi$ ; $c_0=1$ ; $u_0=0.25$ ; $h=1/13$}
		\includegraphics{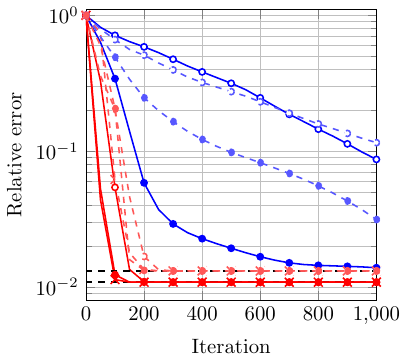}
	\end{subfigure}
	\hfill
	\begin{subfigure}[b]{0.49\textwidth}
		\centering
		\caption{$\bm{\hat{\kappa}}=-\bm{\hat{u}}_0$ ; $\omega=15\pi$ ; $c_0=1.5$ ; $u_0=0.25$ ; $h=1/13$}
		\includegraphics{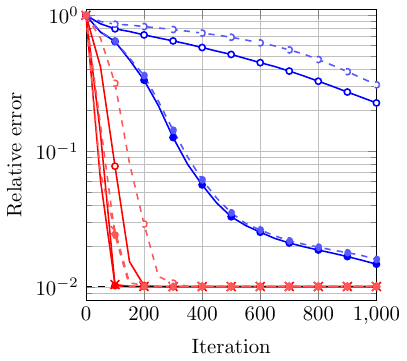}
	\end{subfigure}
	\medskip
	\begin{subfigure}[b]{0.49\textwidth}
		\centering
		\caption{$\bm{\hat{\kappa}}=\bm{\hat{u}}_0$ ; $\omega=25\pi$ ; $c_0=1$ ; $u_0=0.25$ ; $h=1/22$}
		\includegraphics{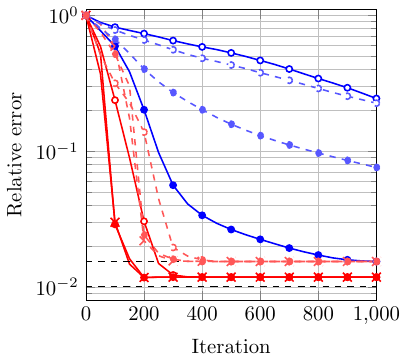}
	\end{subfigure}
	\hfill
	\begin{subfigure}[b]{0.49\textwidth}
		\centering
		\caption{$\bm{\hat{\kappa}}=-\bm{\hat{u}}_0$ ; $\omega=25\pi$ ; $c_0=1.5$ ; $u_0=0.25$ ; $h=1/22$}
		\includegraphics{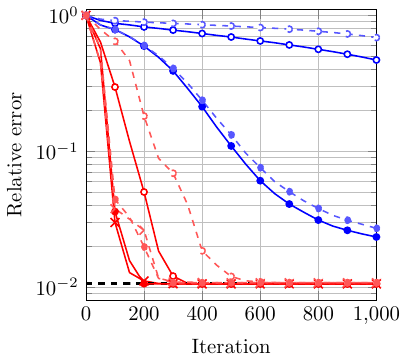}
	\end{subfigure}
	\caption{Plane wave benchmark. Error history with different iterative schemes applied to the DG and CHDG systems for the configuration with an impedance condition on $\Gamma_+$.
		The dashed light red and light blue curves correspond to the configuration with a condition on $p$ on $\Gamma_+$.
		The dashed horizontal lines correspond to the relative numerical errors obtained using a direct solver.}
	\label{fig:histo:planewave}
\end{figure}

Figure~\ref{fig:histo:planewave} shows the history of the relative error for the different configurations and iterative procedures.
The following observations can be made.
The fixed-point iteration applied to the CHDG system always converges, which is consistent with the theoretical result that $\Pi\TS$ is a strict contraction.

The performance of the iterative schemes can be compared in terms of number of iterations.
The fixed-point iteration and GMRES applied to the CHDG system produce very similar error decay while the CGNR scheme is slightly slower.
In all the cases, the error decay is slower for larger wavenumbers $\kappa$ and finer meshes.
The CGNR and GMRES schemes always converge faster when applied to the CHDG system than to the DG system.
The three iterative schemes are slower when solving the DG system compared to the CHDG system.
These observations are consistent with those obtained in \cite{modave2023} for the Helmholtz case.

The error history changes slightly when the impedance condition is replaced with a condition on pressure on $\Gamma_+$ for a given mesh, which corresponds to the dashed lines on Figure~\ref{fig:histo:planewave}.
The impact of the boundary condition is much less evident than it was in \cite{modave2023}, where the error decay was much slower for cavity cases with only a boundary condition on pressure.
Here, with mean flow, we must always prescribe a boundary condition on the inflow region that leads to dissipation.
Therefore, unlike in the Helmholtz case, we can never consider a fully dissipation-free situation.

%%%%%%%%%%%%%%%%%%%%%%%%%%%%%%%%%%%%%%%%%%%%%%%%%%%%%%%%%%%%%%%%%%%%%%%%%%%%%%%%%%%%%%%%

\subsection{Sound point source}
\label{sec:num2}

We consider the wave field generated by a point source $s_\mathrm{vol}(\bm{x})=\delta(\bm{x}-\bm{x}_s)$ at position $\bm{x}_s$ in a uniform mean flow $\bm{u}_0 = u_0\bm{e}_x$, where $\bm{e}_x$ is the unit vector in the $x$-direction.
The reference solution reads
\begin{align}
	p_\mathrm{ref}(\bm{x})
	& = (-\i\omega+\bm{u}_0\cdot\grad)\mathcal{G}(\bm{x}-\bm{x}_s), \\
	\bm{u}_\mathrm{ref}(\bm{x})
	& = -\grad\mathcal{G}(\bm{x}-\bm{x}_s)/\rho_0,
\end{align}
where the Green function $\mathcal{G}$ is the solution of the equation $(-\i\omega+\bm{u}_0\cdot\grad)^2\mathcal{G}-c_0^2\Delta \mathcal{G} = \delta(\bm{x})$ in $\mathbb{R}^2$, see \cite{bailly2000numerical}.
Its expression is given by
\begin{align}
	\mathcal{G}(\bm{x})
	= \frac{\i}{4c_0\sqrt{c_0^2-u_0^2}}
	\: \mathrm{H}_0^{(1)}\left(\omega\frac{\sqrt{c_0^2x^2+(c_0^2-u_0^2)y^2}}{c_0^2-u_0^2}\right)
	\exp\Big(-\i\omega\frac{u_0}{c_0^2-u_0^2} x\Big)\;,
\end{align}
where $\bm{x}=(x,y)$ and $\mathrm{H}_0^{(1)}$ is the Hankel function of the first kind and order zero.

The computational domain is the unit disc $\Omega=\{\bm{x} : \|\bm{x}\|<1\}$.
The point source is placed at $\bm{x}_s=(-u_0/c_0,0)$, so that the radiating waves hit the boundary with normal incidence.
We impose the impedance condition $p-\rho_0c_0\bm{n}\cdot\bm{u}=0$ on $\partial\Omega$ and the inflow condition $\bm{T}^\dagger\bm{u} = \bm{T}^\dagger\bm{u}_\mathrm{ref}$ on the inflow boundary $\Gamma_-$.
The physical parameters are $\rho_0=1$, $c_0=1$ and $\omega=40$.
The mean flow velocities $u_0 = 0.25$ and $0.75$ are tested.
Figure~\ref{fig:bench2:config} shows the setting together with examples of the mesh and of the reference solution.

\begin{figure}[!htb]
	\centering
	\begin{subfigure}[b]{0.3\textwidth}
		\centering
		\includegraphics{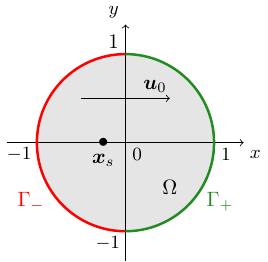}
		\caption{Computational domain.}
	\end{subfigure}
	\centering
	\begin{subfigure}[b]{0.3\textwidth}
		\centering
		\includegraphics[width=0.7\linewidth]{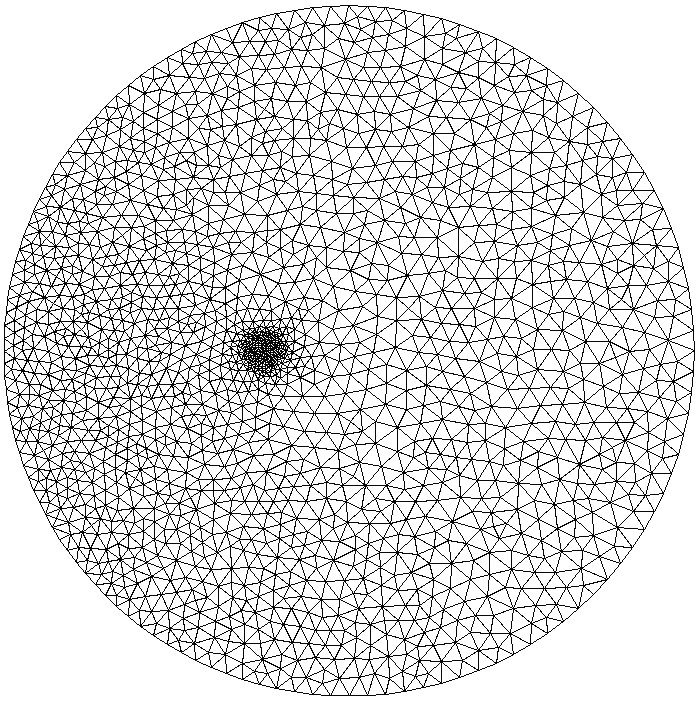} \\
		\phantom{\centering
			\vspace{+2cm}
			\centering}
		\caption{Adapted mesh.}
	\end{subfigure}
	\centering
	\begin{subfigure}[b]{0.3\textwidth}
		\centering
		\includegraphics[width=0.7\linewidth]{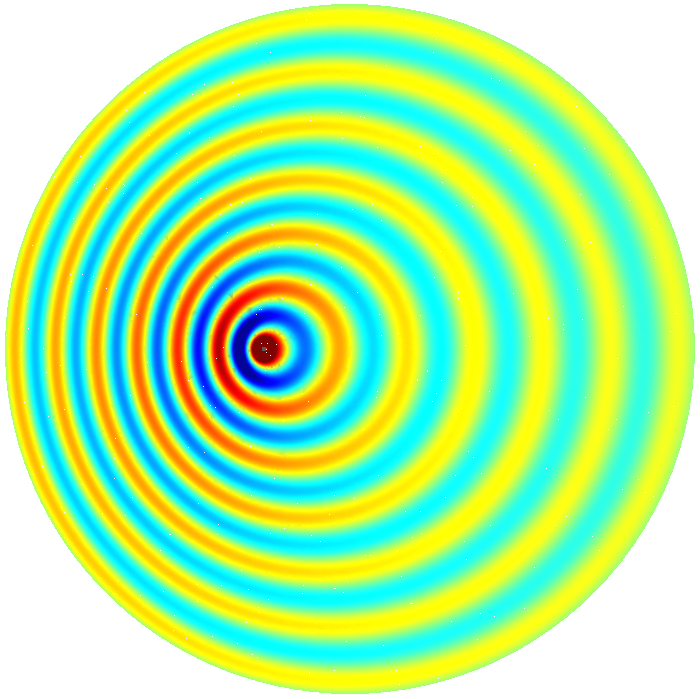} \\
		\centering
		\includegraphics[width=0.7\linewidth]{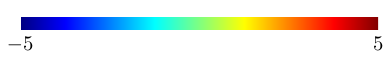}
		\centering
		\caption{Real part of $p_\text{ref}$.}
	\end{subfigure}
	\caption{Source point benchmark. Setting, mesh and reference solution for $u_0=0.25$.}
	\label{fig:bench2:config}
\end{figure}

The effect of the mean flow velocity $u_0$ is to shorten the acoustic wavelength upstream of the source, and lengthen it downstream, as can be seen in Figure~\ref{fig:bench2:config}(c).
In addition, the use of a point source leads to a singular solution at the source position, which is more difficult to approximate than the plane wave solution of the previous section.
The local element size in the mesh is therefore refined near the source and adjusted throughout the domain to capture the variation of the local wavelength, given in this case by
\begin{equation}
	\lambda(\bm{x}) = \frac{2\pi}{\omega}[c_0 - \bm{u}_0\cdot(\bm{x}-\bm{x}_s)/\|\bm{x}-\bm{x}_s\|]
	\;,
\end{equation}
The element size is set to $\tilde{h}/5$ in the vicinity of the source, and it is $\tilde{h}\lambda/\lambda_0$ elsewhere, where $\tilde{h}$ is a prescribed average mesh size and $\lambda_0=2\pi c_0/\omega$ is the wavelength in the absence of mean flow.
The average mesh size $\tilde{h}$ is selected to maintain a relative error close to $1\,\%$.
Because the solution is singular at the source position, the elements closest to $\bm{x}_s$, namely those with at least one of its vertices within a distance $2\tilde{h}$ from $\bm{x}_s$, are excluded when computing the error and solution norms.

\begin{figure}[!tb]
	\centering
	\includegraphics{main_8.pdf}
	\\
	\medskip
	\begin{subfigure}[b]{0.49\textwidth}
		\centering
		\caption{$u_0=0.25$ ; $\tilde{h}=1/20$}
		\includegraphics{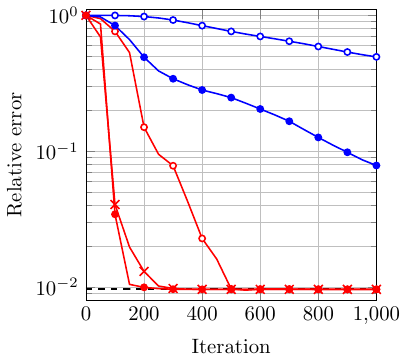}
	\end{subfigure}
	\hfill
	\begin{subfigure}[b]{0.49\textwidth}
		\centering
		\caption{$u_0=0.75$ ; $\tilde{h}=1/40$}
		\includegraphics{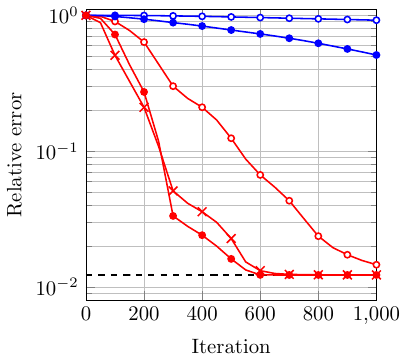}
	\end{subfigure}
	\caption{Point source benchmark. Error history with different iterative schemes applied to the DG and CHDG systems.
		The dashed horizontal lines correspond to the relative numerical errors obtained using a direct solver.}
	\label{fig:bench2:resu}
\end{figure}

Figure~\ref{fig:bench2:resu} shows the history of the relative error for the two configurations and the three iterative procedures.
As with the previous benchmark, CGNR and GMRES perform significantly better when applied to the CHDG system than to the DG system.
The convergence is slower with the largest flow velocity $u_0=0.75$.
Nevertheless, in such a case, the error decay is still relatively quick with the fixed point and GMRES applied to the CHDG system, while the iterative solution of the DG system is particularly slow with all three methods.
Here, the fixed point appears to be slightly slower than GMRES when solving the CHDG system.

%%%%%%%%%%%%%%%%%%%%%%%%%%%%%%%%%%%%%%%%%%%%%%%%%%%%%%%%%%%%%%%%%%%%%%%%%%%%%%%%%%%%%%%%

\subsection{Vorticity wave}
\label{sec:num3}

As mentioned above, the linearized Euler equations support two types of waves: sound waves and vorticity waves.
We now assess the CHDG method for the vorticity waves, by considering an infinite rigid duct with a uniform mean flow and no volume source term.
The duct walls are along the $x$-direction and the duct height is unity.
% , i.e.~$\Omega_\infty=\mathbb{R}\times{]0,1[}$.
As before, the mean flow velocity is $\bm{u}_0 = u_0\bm{e}_x$, with $u_0>0$.
The vorticity waves in this problem can be written in terms of the normal modes:
\begin{align}
	p_{\mathrm{ref},n}(\bm{x})      & = 0,                                                                                 \\
	\bm{u}_{\mathrm{ref},n}(\bm{x}) & = \left( (\i n\pi u_0/\omega) \cos(n\pi y), \sin(n\pi y)\right) e^{\im\omega x/u_0},
\end{align}
in which $\bm{x} = (x,y)$ and $n\in\mathbb{N}$ is the mode number.
While we had $\curl\bm{u}=\mathbf{0}$ for the sound waves considered before, we have $\div\bm{u}=0$ for the vorticity waves.

For the numerical simulations, a chosen mode $n$ is computed within the domain $\Omega = {]0,2[}\times{]0,1[}$, see Figure~\ref{fig:bench3:domain}.
The rigid wall condition $\bm{n}\cdot\bm{u}=0$ is prescribed on the upper and lower walls $\Gamma_\mathrm{wall}$.
In accordance with Assumption \ref{hyp:aero:passivity}, both the inflow condition $\bm{T}^\dagger\bm{u} = \bm{T}^\dagger\bm{u}_{\mathrm{ref},n}$ and the impedance condition $p-\rho_0c_0\bm{n}\cdot\bm{u} = p_{\mathrm{ref},n}-\rho_0c_0\bm{n}\cdot\bm{u}_{\mathrm{ref},n}$ are applied on the inflow boundary $\Gamma_\mathrm{in}$.
Either the impedance condition or the condition $p = p_{\mathrm{ref},n}$ is prescribed on the outflow boundary $\Gamma_\mathrm{out}$.
The physical parameters are $\rho_0=1$, $c_0=1$, $\omega=5\pi$ and $u_0=0.5$.
We consider the modes $n=1$ and $10$, which are shown in Figures~\ref{fig:bench3:solRef1} and \ref{fig:bench3:solRef10}, respectively.
A finer mesh is required for the higher-order mode, which is more oscillatory, in order to keep the relative error close $1\,\%$.

\begin{figure}[!htb]
	\centering
	\begin{subfigure}[b]{0.9\textwidth}
		\centering
		\includegraphics{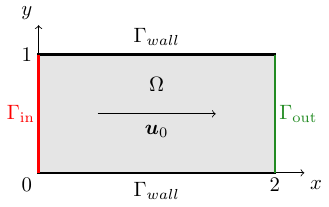}
		\centering
		\caption{Bounded rigid duct with a horizontal right-propagating background flow.}
		\label{fig:bench3:domain}
	\end{subfigure}
	\\
	\medskip
	\begin{subfigure}[b]{0.49\textwidth}
		\centering
		\includegraphics[width=0.6\linewidth]{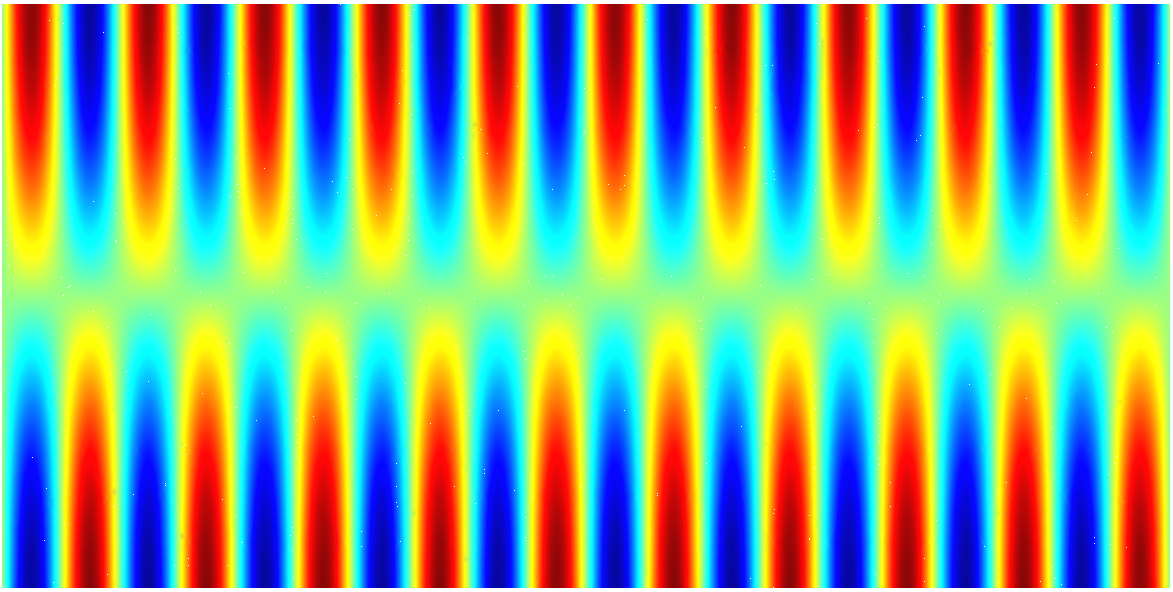} \\
		\centering
		\includegraphics[width=50mm]{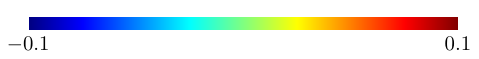}
		\centering
		\caption{Real part of $u_{\text{ref},x}$ for $n=1$ and $u_0=0.5$.}
		\label{fig:bench3:solRef1}
	\end{subfigure}
	\centering
	\begin{subfigure}[b]{0.49\textwidth}
		\centering
		\includegraphics[width=0.6\linewidth]{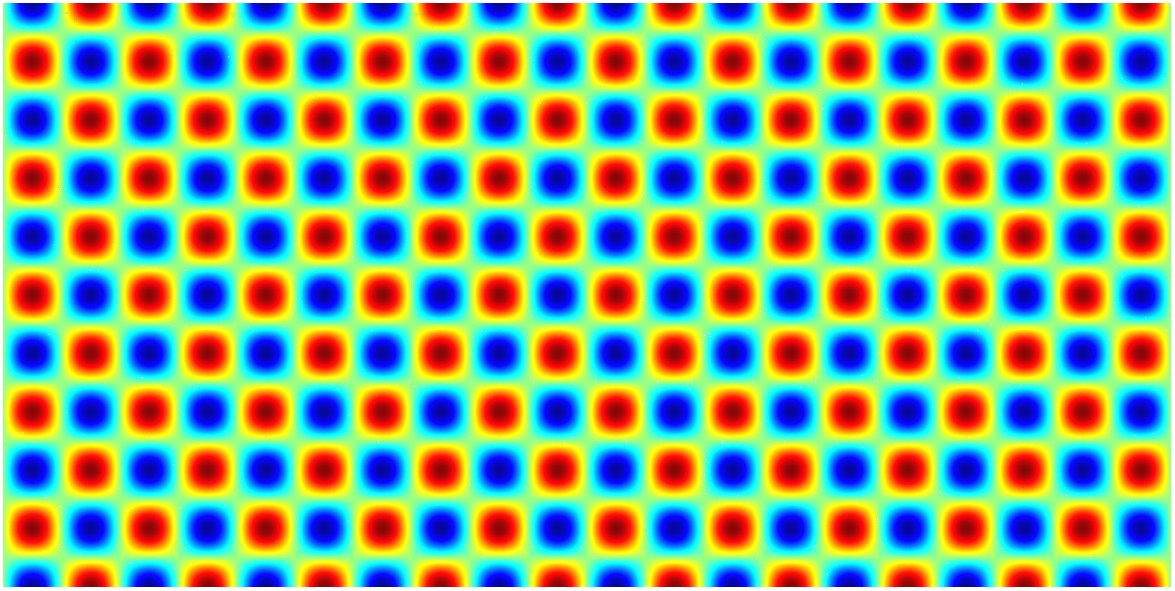} \\
		\centering
		\includegraphics[width=50mm]{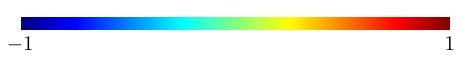}
		\centering
		\caption{Real part of $u_{\text{ref},x}$ for $n=10$ and $u_0=0.5$.}
		\label{fig:bench3:solRef10}
	\end{subfigure}
	\caption{Vorticity wave benchmark. Domain and reference solution.}
\end{figure}

Figure~\ref{fig:bench3:resu} shows the history of the relative error for the different cases and iterative procedures.
The results confirm that the observations made for the benchmarks with sound waves are also valid for cases with vorticity waves.
The fixed point applied to the CHDG system always converges.
Iterative schemes applied to the CHDG system converge more quickly than those applied to the DG system.
In the case of the CHDG system, the decay rates are similar for the fixed-point and GMRES iterations, whereas CGNR is slightly slower.
The results are very similar, and sometimes identical, when prescribing an impedance condition or a condition on pressure on $\Gamma_\mathrm{out}$.

\begin{figure}[!tbp]
	\centering
	\includegraphics{main_8.pdf}
	\\
	\begin{subfigure}[b]{0.49\textwidth}
		\centering
		\caption{$n=1$ ; $h=1/11$}
		\includegraphics{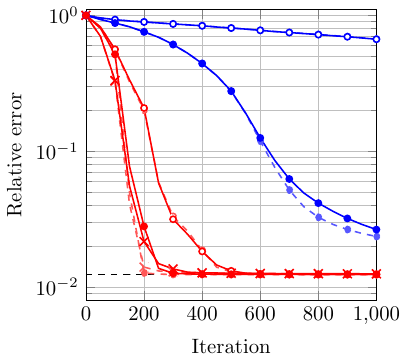}
	\end{subfigure}
	\hfill
	\begin{subfigure}[b]{0.49\textwidth}
		\centering
		\caption{$n=10$ ; $h=1/20$}
		\includegraphics{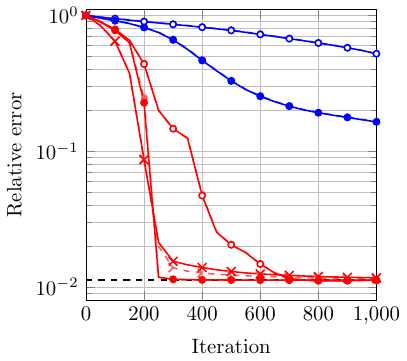}
	\end{subfigure}
	\caption{Vorticity wave benchmark. Error history with different iterative schemes applied to the DG and CHDG systems for the configuration with an impedance condition on $\Gamma_\mathrm{out}$.
		The dashed light red and light blue curves correspond to the configuration with a condition on $p$ on $\Gamma_\mathrm{out}$.
		The dashed horizontal lines correspond to the relative numerical errors obtained using a direct solver.}
	\label{fig:bench3:resu}
\end{figure}

%%%%%%%%%%%%%%%%%%%%%%%%%%%%%%%%%%%%%%%%%%%%%%%%%%

\section{Conclusion}
\label{conclusion}

The CHDG approach, initially studied for the Helmholtz equation in \cite{modave2023}, has been extended to general wave propagation problems defined with the time-harmonic version of symmetric hyperbolic systems with constant coefficients \cite{pescuma2025b}.
This class of problems covers a broad range of applications, including the propagation of acoustic and electromagnetic waves in homogeneous media, which are both addressed in \cite{modave2023} and \cite{rappaport2025}, respectively.

The method relies on a discontinuous Galerkin scheme with upwind numerical fluxes that are written in terms of transmission variables in a general framework.
These variables can be interpreted as physical quantities that are transported outward and inward across each face of the elements and are intuitively connected to the notion of wave propagation.
Incoming transmission variables are used to define the hybridized CHDG system, which can be written as $(\TI-\Pi\TS)\bm{G}^-=\bm{b}$.
Assuming that the boundaries are passive, it is proven that the $\Pi\TS$ is a strict contraction.

The framework is applied to the time-harmonic linearized Euler equations with a uniform subsonic mean flow.
The key elements of the scheme are explicitly derived for this model, which has the particularity of supporting both acoustic and vorticity waves.
Unlike the acoustic and electromagnetic models previously considered, the sign of the eigenvalues of the flux matrix at the element faces depends on the orientation of the mean flow.
Consequently, the number of boundary conditions and incoming transmission variables on each face can vary throughout the domain.
The proposed general framework handles this aspect effectively.

We validated and studied the method using 2D numerical benchmarks involving sound and vorticity waves in uniform mean flows and different types of boundary conditions.
The results show that the fixed point always converges when solving the CHDG system, which is in accordance with the theory.
For the considered benchmarks, the fixed point is nearly as fast as GMRES and always faster than CGNR.
These conclusions are consistent with those in \cite{modave2023} and \cite{rappaport2025}, in which it was shown that solving the CHDG system iteratively requires fewer iterations than solving standard HDG and DG systems with classical schemes.
However, since an equivalent HDG scheme is not available here, the comparison is limited to CHDG and DG.
As expected, the iterative solution of the DG system always requires more iterations.

Since the proposed numerical results were obtained using a \textsf{MATLAB} script, conducting a complete performance study based on runtime and memory storage would be irrelevant.
Nevertheless, preliminary results in the context of electromagnetic waves are presented in \cite{rappaport2025}, and we are currently developing a parallel GPU-accelerated code.

Future work will apply the proposed general framework to other physical models, such as elastic waves and wave propagation in anisotropic media.
The framework can be extended to non-homogeneous media using the strategies studied in \cite{pescuma2025}.

\paragraph{Acknowledgments.}
This work was supported in part by the ANR JCJC project \emph{WavesDG} (research grant ANR-21-CE46-0010).

%%%%%%%%%%%%%%%%%%%%%%%%%%%%%%%%%%%%%%%%%%%%%%%%%%

\footnotesize
\setlength{\bibsep}{0pt plus 0ex}
\bibliographystyle{abbrvnat}
\bibliography{myrefs}
\addcontentsline{toc}{section}{References}

\end{document}